\let\phi\varphi
\def\Z{{\mathbb Z}}
\def\Pt{\mathop{\rm Pt}}
\def\twojoin{\mathord{=}}
\def\threejoin{\mathord{\equiv}}
\def\calG{{\cal G}}
\newcommand{\FF}[1][]{\mathrel{\xrightarrow{\ifx @#1@ FF\else FF_{#1} \fi}}}
\newcommand{\nFF}[1][]{\thickspace\not\negthickspace\xrightarrow{\ifx @#1@ FF\else FF_{#1} \fi}}
\def\cc{\mathrel{\xrightarrow{cc}}}
\def\ncc{\mathrel{\thickspace\not\negthickspace\xrightarrow{cc}}}
\def\hom{\mathrel{\xrightarrow{hom}}}
\newtheorem{theorem}{Theorem}[section]
\newtheorem{lemma}[theorem]{Lemma}
\newtheorem{corollary}[theorem]{Corollary}
\newtheorem{conjecture}[theorem]{Conjecture}
\newtheorem{question}[theorem]{Question}
\newenvironment{proof}{\par\medskip\noindent{\bf Proof: }}
  {\unskip\hfill$\Box$\par\medskip}
\def\proofof #1{\noindent{\bf Proof of #1:}}
\title{Cycle-continuous mappings -- order structure} 
\author{
Robert Šámal \thanks{Supported by grant GA \v{C}R P201/10/P337.}
\\
\small
 Computer Science Institute, Charles University
\\
\small
 email: {\tt samal@iuuk.mff.cuni.cz} 
}
\begin{document}
\date{}
\maketitle

\begin{abstract}
Given two graphs, a mapping between their edge-sets is 
\emph{cycle-continuous}, if the preimage of every cycle is a cycle. 
Answering a question of DeVos, Nešetřil, and Raspaud, we prove that there 
exists an infinite set of graphs with no cycle-continuous mapping between them. 
Further extending this result, we show that every countable poset can be represented 
by graphs and existence of cycle-continuous mappings between them. 
\end{abstract}

\section{Introduction}
\label{sec:group}

Many questions at the core of graph theory can be formulated as 
questions about cycles or more generally about flows on graphs. 
Examples are Cycle Double Cover conjecture, Berge-Fulkerson conjecture, 
and Tutte's 3-Flow, 4-Flow, and 5-Flow conjectures. 
For a detailed treatment of this area the reader may refer to 
\cite{seymour-handbook} or \cite{zhang-book}. 

As an approach to these problems Jaeger \cite{Jaeger} and 
DeVos, Nešetřil, and Raspaud \cite{DNR} 
defined a notion of graph morphism continuous with respect to 
group-valued flows. In this paper we restrict ourselves to the case of 
$\Z_2$-flows, that is to cycles. Thus, the following is the 
principal notion we study in this paper: 

Given graphs (parallel edges or loops allowed) $G$ and $H$, a mapping $f: E(G) \to E(H)$ is called 
\emph{cycle-continuous}, if for every cycle $C \subseteq E(H)$, 
the preimage $f^{-1}(C)$ is a cycle in~$G$. We emphasize, that 
by a \emph{cycle} we understand (as is common in this area) 
a set of edges such that every vertex is adjacent with an even number of them. 
For shortness we sometimes call cycle-continuous mappings just $cc$ mappings. 

The fact that $f$ is a $cc$ mapping from~$G$ to~$H$ is denoted by 
$f: G \cc H$. If we just need to say that there exists a $cc$ mapping 
from~$G$ to~$H$, we write $G \cc H$; inspired by the notation common 
in graph homomorphisms. 

With the definition covered, we mention the main conjecture 
describing the properties of $cc$ mappings. 

\begin{conjecture}[Jaeger]\label{c:Ptcol} 
For every bridgeless graph $G$ we have $G\cc\Pt$, where $\Pt$ denotes the Petersen graph. 
\end{conjecture}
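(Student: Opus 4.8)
The plan is to attack the conjecture in three stages: a reduction to the cubic case, a linear-algebraic reformulation of the target condition, and an inductive construction that is eventually forced to confront the snarks. First I would reduce to bridgeless \emph{cubic} graphs. Since $\Pt$ is cubic, degree-two vertices of $G$ can be suppressed and vertices of degree larger than three can be split through a small gadget, yielding a bridgeless cubic graph $G'$ without destroying the cycle structure in an essential way; one then verifies that a cc mapping $G' \cc \Pt$ projects back to a cc mapping $G \cc \Pt$. This is the standard reduction used throughout the cycle-double-cover literature, so the conjecture is equivalent to its restriction to bridgeless cubic graphs.

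Second, I would reformulate ``$G \cc \Pt$'' linearly. Because taking preimages commutes with symmetric difference, the operator $f^{-1}$ is $\Z_2$-linear on the edge spaces, so a map $f : E(G) \to E(\Pt)$ is cycle-continuous as soon as $f^{-1}(C)$ is an even subgraph of $G$ for each $C$ in a fixed basis of the cycle space $\mathcal{C}(\Pt)$. Since $\dim \mathcal{C}(\Pt) = |E(\Pt)| - |V(\Pt)| + 1 = 15 - 10 + 1 = 6$, the entire requirement collapses to six membership constraints, and passing to the dual (cut) space turns the search for $f$ into a finite system of flow/parity conditions on $G$. The $3$-edge-colorable cubic graphs are comparatively benign here: such a graph carries a nowhere-zero $\Z_2 \times \Z_2$-flow, which already pins down the preimages of most basis cycles, so the flow system can be solved directly. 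The essential point is that $\Pt$ is the smallest snark, hence \emph{not} $3$-edge-colorable, which is precisely why it can serve as a universal target for the graphs that fail to be colorable.

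Third, and this is where the real difficulty lives, I would try to dispose of the remaining cubic graphs, i.e.\ the snarks, by induction on the number of vertices through an unavoidable set of reducible configurations, in the spirit of the discharging method. Each configuration would be shown either to contain a small structure that can be reduced to a strictly smaller bridgeless cubic graph $G_0$ (so that a cc mapping $G_0 \cc \Pt$ lifts back through the reduction) or to be excludable from a hypothetical minimal counterexample. The main obstacle is exactly this step: exhibiting such a reducible configuration inside \emph{every} snark. That task carries the full strength of the problem: from any cc mapping $f : G \cc \Pt$ one pulls back a fixed cycle double cover of $\Pt$ and, since each edge of $\Pt$ lies in exactly two cover cycles while each $f^{-1}(C)$ is a cycle of $G$, obtains a cycle double cover of $G$; thus the conjecture implies the Cycle Double Cover conjecture and is tightly entangled with Berge--Fulkerson.

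Consequently I do not expect the inductive step to close in general, and a complete proof is beyond the methods available. What I would realistically aim to establish are restricted instances -- for example cyclically $4$-edge-connected cubic graphs of bounded genus, or graphs admitting a suitable near-decomposition into colorable pieces plus a bounded ``snark core'' that can be matched into $\Pt$ by hand -- leaving the uniform treatment of all snarks as the central open point, exactly as one would expect for what is one of the flagship open conjectures of the area.
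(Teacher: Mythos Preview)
The statement you were asked to address is Conjecture~\ref{c:Ptcol} (Jaeger's conjecture), and the paper does \emph{not} prove it: it is presented precisely as an open conjecture, used only as motivation, and the paper's actual results go in a different direction (constructing infinite antichains for~$\cc$ and representing arbitrary countable posets). There is therefore no ``paper's own proof'' to compare your proposal against.

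To your credit, you are essentially aware of this: your write-up is not a proof but a survey of possible angles of attack together with an honest admission that the inductive step over snarks ``carries the full strength of the problem'' and that ``a complete proof is beyond the methods available.'' That assessment is correct. Your reduction to the cubic case and the linear-algebraic reformulation are standard and fine; in particular the $3$-edge-colorable case is indeed easy, since $G\cc K_2^3$ and $K_2^3\cc\Pt$ via any vertex cut of~$\Pt$. But the third stage --- an unavoidable set of reducible configurations handling all snarks --- is exactly the missing idea, not a step you have supplied. As you yourself note, Jaeger's conjecture implies the Cycle Double Cover conjecture, so any purported proof would have to contain a proof of CDC as a special case; nothing in your outline does.

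In short: there is no gap to diagnose in the sense of a flawed argument, because there is no argument claiming to be complete; and there is no comparison to make, because the paper leaves the conjecture open. The appropriate thing to record is simply that Conjecture~\ref{c:Ptcol} remains open and that the paper does not attempt a proof.
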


If true, this would imply many conjectures in the area. 
To illustrate this, suppose we want to find a 5-tuple of cycles in a graph~$G$ covering each of its edges twice 
(this is conjectured to exist by 5-Cycle double cover conjecture \cite{seymour-cdc,szekeres-cdc,celmins-thesis}).  
Further, suppose $f: G \cc \Pt$. We can use $C_1$, \dots, $C_5$ --- 
a 5-tuple of cycles in the Petersen graph double-covering its edges--- and then it is easy to check that 
$f^{-1}(C_1)$, \dots, $f^{-1}(C_5)$ have the same property in~$G$. 

DeVos et al.\ \cite{DNR} study this notion further and ask the following question 
about the structure of cycle-continuous mappings. 
We say that graphs $G$, $G'$ are $cc$-incomparable if there is no $cc$ mapping between them, 
that is $G\ncc G'$ and $G' \ncc G$. 

\begin{question}[\cite{DNR}]\label{q:antichain} 
Is there an infinite set $\calG$ of bridgeless graphs such that every two of them 
are $cc$-incomparable? 
\end{question} 

A negative answer to this would suggest a way to attack Conjecture~\ref{c:Ptcol}. 
DeVos et al. \cite{DNR} prove in their Theorem~2.9 that if there is no infinite set 
as in the above question, neither an infinite chain 
$G_1 \cc G_2 \cc G_3 \cc \cdots$ (such that $G_{n+1} \ncc G_n$ for all $n$), 
then there is a single graph $H$ such that for every other bridgeless graph~$G$ 
we have $G \cc H$. 

DeVos et al.~\cite{DNR} also show that arbitrary large sets of $cc$-incomparable graphs exist. 
Their proof is based on the notion of critical snarks and on 
Lemma~\ref{l:critical}; these will be crucial also for our proof. 

We will show, that the answer to Conjecture~\ref{q:antichain} is positive. 
Thus, the following is the first main result of this paper. 

\begin{theorem}
There is an infinite set $\calG$ of cubic bridgeless graphs such that every two of them 
are $cc$-incomparable. 
\end{theorem}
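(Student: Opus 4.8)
The plan is to realize $\calG$ as an infinite family of \emph{critical snarks} and to show, using Lemma~\ref{l:critical} as the main rigidity tool, that no cycle-continuous mapping exists between two distinct members. First I record the two elementary facts that frame the search. The relation $\cc$ is a quasi-order: it is reflexive, and it is transitive because the composition of two $cc$ mappings is again $cc$ (the preimage of a cycle under $g\circ f$ is $f^{-1}(g^{-1}(C))$, a cycle). Hence it suffices to produce infinitely many pairwise non-isomorphic cubic bridgeless graphs $G$, $G'$ with $G\ncc G'$ in both directions. Second, $cc$ mappings preserve the \emph{absence} of a nowhere-zero $4$-flow in the forward direction: a graph $H$ has a nowhere-zero $4$-flow iff there are cycles $A,B$ of $H$ with $A\cup B=E(H)$, and if $f\colon G\cc H$ then $f^{-1}(A),f^{-1}(B)$ are cycles of $G$ whose union is $f^{-1}(A\cup B)=f^{-1}(E(H))=E(G)$, since $f$ is a total function. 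Thus $G$ a snark and $G\cc H$ forces $H$ to be a snark as well, which confines the entire problem to the class of snarks.

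The essential difficulty is that the obvious numerical invariants are useless here, so the antichain must be built from a genuinely $cc$-specific, rigid obstruction. Size is not monotone: mapping every edge of an arbitrary $G$ onto the single edge of $\edge$ is vacuously $cc$ (the only cycle of $\edge$ is empty), so $G\cc\edge$ always; and Conjecture~\ref{c:Ptcol} predicts that every bridgeless graph maps $cc$ onto the small graph $\Pt$. Consequently a $cc$ mapping may collapse a large, complicated snark onto a small one, and neither the number of edges, the cycle rank, nor the flow number can separate two snarks. Any proof of incomparability must therefore rest entirely on a structural obstruction that survives such collapsing, and this is exactly the role I intend for critical snarks and Lemma~\ref{l:critical}.

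The construction I would use takes an infinite sequence $S_1,S_2,\dots$ of pairwise non-isomorphic critical snarks (critical snarks of arbitrarily large order are known to exist) chosen so that their minimal non-$3$-colorable ``cores'' are pairwise incompatible. Lemma~\ref{l:critical} is what makes a $cc$ mapping out of, and into, a critical snark rigid: a cc mapping between two critical snarks cannot destroy or freely deform the minimal obstruction that certifies non-$3$-colorability, so it must carry the obstruction of the source faithfully onto a compatible one in the target. Choosing the family so that no $S_i$ contains a copy of the obstruction of any other $S_j$ (for $i\neq j$) then blocks $S_i\cc S_j$ in \emph{both} directions simultaneously, yielding $\calG=\{S_1,S_2,\dots\}$ as the desired infinite antichain.

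The main obstacle, and the heart of the argument, is precisely this last point: converting Lemma~\ref{l:critical} into a statement strong enough to exclude cross-mappings for \emph{all} infinitely many pairs at once. In the finite antichains of DeVos et al.\ the mutual incomparability can be arranged by a bounded, set-dependent comparison of the members, but an infinite family has no such uniformizing parameter, and — as the collapse $G\cc\edge$ shows — the blocking cannot come from any size- or girth-type bound. Everything thus hinges on pinning down, via criticality, an invariant of the minimal obstruction that a $cc$ mapping must preserve, and on engineering the sequence $\{S_i\}$ so that these invariants are pairwise unrelated; verifying that Lemma~\ref{l:critical} indeed forbids the obstruction of a smaller critical snark from appearing inside a larger one is the step I expect to require the most care.
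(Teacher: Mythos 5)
There is a genuine gap, and it sits exactly where you predicted it would: your plan never gets past the hypothesis of Lemma~\ref{l:critical}. That lemma is a rigidity statement only for pairs of critical snarks \emph{of the same size} ($|E(G)|=|E(H)|$ is an explicit hypothesis), and it says nothing whatsoever about $cc$ mappings between critical snarks of different sizes. But any infinite family of pairwise non-isomorphic graphs necessarily contains pairs of different sizes (there are only finitely many graphs of each order), so the lemma is never applicable to even a single pair in your proposed family $\{S_1,S_2,\dots\}$. The auxiliary notion you introduce to bridge this --- ``minimal non-$3$-colorable cores'' that are ``pairwise incompatible'' and that a $cc$ mapping ``must carry faithfully'' --- is not defined, and no such preservation mechanism can exist in the generality you need: $cc$ mappings are arbitrary edge maps, not embeddings, and (as you yourself note via $G\cc\edge$) they can collapse a large snark onto a much smaller graph. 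Indeed Conjecture~\ref{c:Ptcol} predicts that \emph{every} snark, critical or not, maps $cc$ onto the Petersen graph, which is itself a cyclically $5$-edge-connected critical snark; so the direction ``large critical snark $\ncc$ small critical snark'' is not merely unproven but conjecturally false in general. No choice of ``cores'' can block it.

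The paper circumvents precisely this obstacle by \emph{not} taking the antichain members to be critical snarks at all. Instead it fixes just two critical snarks of the \emph{same} size (the Blanu\v{s}a snarks $B_1,B_2$ of Lemma~\ref{l:Blanusasnarks}) and builds the infinite family as composite graphs $H_n$: copies of $B_1,B_2$ glued by $3$-joins along a path with color pattern $1(2)^{n-1}1$. Lemma~\ref{l:critical} is applied only to pairs of these equal-sized building blocks, via Lemma~\ref{l:snarktree}, which shows any $cc$ image of a block inside a composite graph must land isomorphically on a single block; Theorem~\ref{t:snarktrees} then promotes this to the statement that any $cc$ mapping $H_m\cc H_n$ is guided by a color-respecting homomorphism of reflexive paths. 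Finally, the failure of vertex-transitivity of $B_2$ (vertex $a$ cannot be mapped to $b$) forces this path homomorphism to satisfy $f(i+1)=f(i)+1$, which together with the endpoint colors forces $m=n$. So the incomparability comes from combinatorics of path homomorphisms among graphs built from identical blocks, not from any invariant distinguishing critical snarks of different sizes --- the latter is exactly what cannot be done with the tools available (and, if Conjecture~\ref{c:Ptcol} holds, cannot be done at all in one direction).
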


While this definitely shouldn't be interpreted as an indication that Conjecture~\ref{c:Ptcol} 
is false, it eliminates some easy paths towards the possible proof of it. 
As a further indication of the complexity of the structure of $cc$ mappings, we 
study the order that $cc$ mappings induce on graphs. 

When given a set of objects and morphisms between them, it is standard to 
consider a poset in which $x \le y$ iff there is a mapping from~$x$ to~$y$. 
In this sense we can speak about the poset of $cc$~mappings and ask what 
subposets it contains. The above theorem can be restated: this poset 
contains infinite antichains (poset with no relation). 
It is perhaps surprising, that this poset in fact contains all other countable posets. 

\begin{theorem}
Every countable (finite or infinite) poset can be represented by 
a set of graphs and existence of cycle-continuous mappings between them. 
\end{theorem}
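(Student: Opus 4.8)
The plan is to reduce the statement to a single clean sub-problem and then solve that with disjoint unions of the rigid graphs supplied by the first theorem. First I would replace an arbitrary countable poset $P$ by its embedding into a power set: to $p\in P$ associate its principal down-set $D(p)=\{q\in P: q\le p\}$. Since $p\in D(p)$, one checks immediately that $p\le p'$ holds in $P$ if and only if $D(p)\subseteq D(p')$, so it suffices to represent the inclusion order on subsets of a countable set. Fix a countable family $\{A_i\}_{i\in P}$ of cubic bridgeless graphs that are pairwise $cc$-incomparable (as produced, from critical snarks, in the proof of the first theorem), and for $S\subseteq P$ put $B_S=\bigsqcup_{i\in S}A_i$. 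The graph realizing $p$ will be $G_p=B_{D(p)}$, and everything then comes down to proving
\[
B_S\cc B_T \quad\Longleftrightarrow\quad S\subseteq T .
\]

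The main tool I would isolate is a decomposition lemma for maps into a disjoint union: $G\cc H_1\sqcup H_2$ holds if and only if $E(G)$ can be partitioned into $E_1,E_2$ with $G[E_i]\cc H_i$. The backward direction glues the two maps; since a cycle of $H_1\sqcup H_2$ splits as $C_1\cup C_2$ and each $G[E_i]$-preimage is even at every vertex, the union of the two preimages is again even, hence a cycle. For the forward direction one sets $E_i=f^{-1}(E(H_i))$ and observes that the preimage of a cycle of $H_i$ is a cycle of $G$ contained in $E_i$, i.e.\ a cycle of $G[E_i]$. This extends verbatim to arbitrary (even infinite) disjoint unions, and I would combine it with the remark that intersecting a cycle of a disjoint union with the edge-set of one component yields a cycle of that component. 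The easy direction of the equivalence is then immediate: if $S\subseteq T$, mapping each component $A_i$ ($i\in S$) identically onto its copy in $B_T$ is cycle-continuous, so $B_S\cc B_T$.

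For the hard direction assume $f\colon B_S\cc B_T$ and fix $i_0\in S$; I want to force $i_0\in T$. The restriction of $f$ to the component $A_{i_0}$ is still cycle-continuous, because the preimage of any cycle of $B_T$ meets $E(A_{i_0})$ in an even subgraph of $A_{i_0}$; hence $A_{i_0}\cc B_T$. Applying the decomposition lemma (and using that $A_{i_0}$ is finite, so only finitely many components of $B_T$ are hit) I obtain a partition of $E(A_{i_0})$ into pieces, each of which is a subgraph of the critical snark $A_{i_0}$ admitting a $cc$ map to some $A_j$ with $j\in T$. Here I would invoke the rigidity of critical snarks encapsulated in Lemma~\ref{l:critical}: a proper subgraph of $A_{i_0}$ carries no cycles into another snark, so all the cyclic part of $A_{i_0}$ must sit in a single piece equal to $A_{i_0}$ itself, and that piece satisfies $A_{i_0}\cc A_j$. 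Pairwise incomparability then forces $j=i_0$, whence $i_0\in T$. As $i_0\in S$ was arbitrary, $S\subseteq T$, completing the equivalence; antisymmetry of the resulting order on $\{G_p\}$ is automatic, so the family represents $P$ exactly.

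The genuinely hard part is the rigidity step, not the bookkeeping around it. The first theorem only guarantees that the $A_i$ are pairwise incomparable, whereas here I need the stronger ``$cc$-primeness'': no nontrivial cycle-continuous image of a proper subgraph of one gadget inside another gadget, and no way to reassemble a gadget from pieces that individually map to smaller snarks. I expect this to be exactly where the critical-snark machinery and Lemma~\ref{l:critical} must be pushed — most likely by choosing the $A_i$ with rapidly growing size or girth so that a $cc$ map onto $A_j$ forces its domain to be at least as complex as $A_j$, ruling out both $j\ne i_0$ and proper subgraphs of $A_{i_0}$. Once that rigidity is in hand, the reduction to the inclusion order and the disjoint-union calculus above are routine.
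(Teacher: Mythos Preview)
Your approach is correct and genuinely different from the paper's, but you overestimate the difficulty of the rigidity step. The ``hard part'' you flag is in fact a one-line parity argument, and neither criticality nor Lemma~\ref{l:critical} is needed (indeed the latter does not even apply, since the antichain graphs $A_i$ from Corollary~\ref{c:infantichain} are tree-snarks with nontrivial $3$-edge-cuts, hence not cyclically $4$-edge-connected). Suppose $E(A_{i_0})=E_1\cup\cdots\cup E_k$ with each $A_{i_0}[E_l]\cc A_{j_l}$ and each $A_{j_l}$ bridgeless. At any vertex $v$ of the cubic graph $A_{i_0}$, if some $E_l$ has exactly one edge at $v$ then by Lemma~\ref{l:altdef} that single edge must map to a cut of odd size, hence of size~$1$, i.e.\ a bridge in $A_{j_l}$ --- impossible. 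So each $E_l$ has degree $0$, $2$, or $3$ at $v$; as these sum to~$3$, exactly one has degree~$3$ and the rest~$0$. Thus all three edges at every vertex lie in the same piece, and by connectedness one $E_l$ equals $E(A_{i_0})$. Hence $A_{i_0}\cc A_{j_l}$, and pairwise incomparability forces $j_l=i_0\in T$. (Your heuristic that ``a proper subgraph carries no cycles into another snark'' is false as stated --- a $5$-cycle maps anywhere --- but the parity argument sidesteps this entirely.)

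The paper takes a completely different route: it invokes the theorem of Hubi\v{c}ka and Ne\v{s}et\v{r}il that every countable poset is already represented by finite directed paths under homomorphism, and then encodes a directed path as a path-snark in $T(\{B_2\})$, using the asymmetry of the second Blanu\v{s}a snark (no automorphism sends $a$ to $b$) to record edge directions; Theorem~\ref{t:snarktrees} then identifies $cc$ maps between such path-snarks with directed-path homomorphisms. Your argument is more self-contained --- it needs no external universality result --- but it produces disconnected graphs, and when some principal ideal $D(p)$ is infinite it produces infinite graphs; the paper's construction always yields finite connected cubic graphs.
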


To further illustrate the topic, we briefly mention related concept of 
cut-contin\-uous mappings. By a \emph{cut} we mean a set of edges of form~$\delta(U)$ -- all edges
leaving some set~$U$ of vertices. Such set may be empty (if $U$ is empty), but if 
it is not, it will disconnect the graph. However, not all edge-sets that disconnect the 
graph are cuts in our sense! A set~$\delta(\{v\})$ will be called \emph{elementary edge-cut} 
determined by vertex~$v$. 

A mapping $f: E(G) \to E(H)$ is \emph{cut-continuous} if the preimage of every cut is a cut. 
Cut-continuous mappings behave in many contexts similarly as homomorphisms (see 
\cite{NS-TT1, NS-TT2}), in particular Question~\ref{q:antichain} would be trivial for cut-continuous mappings. 
The cycle-continuous mappings, on the other hand, have been hard to tame so far, perhaps 
because of their connection with so many longstanding conjectures.

\section{Properties of cycle-continuous mappings} 

\subsection{Basics} 

Before we describe our construction, we introduce basic properties 
of cycle-contin\-uous mappings. Most of them are easy and may implicitly appear before, 
but we list all that we need for the reader's convenience. 
By a graph we mean a multigraph with loops and parallel edges allowed.

The following is well-known. 

\begin{lemma}
The following are equivalent properties of a graph~$G$: 
\begin{itemize} 
  \item $G \cc K_2^3$ (here $K_2^3$ is the graph with two vertices and three parallel edges). 
  \item $G$ has a $4$-NZF
  \item (if $G$ is cubic) $G$ admits a 3-edge-coloring 
\end{itemize} 
\end{lemma}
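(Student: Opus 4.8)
The plan is to unwind the definition of $\cc$ for the very simple target $K_2^3$, recognize the resulting combinatorial condition as a nowhere-zero $\Z_2\times\Z_2$-flow (equivalently a $4$-NZF, by Tutte's theorem), and then specialize to cubic graphs by a one-line parity count.

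First I would list the cycles of $K_2^3$. Writing $e_1,e_2,e_3$ for its three parallel edges, an edge set is a cycle exactly when both vertices have even degree, so the cycles are $\emptyset$, $\{e_1,e_2\}$, $\{e_1,e_3\}$, $\{e_2,e_3\}$ (note that the full set $\{e_1,e_2,e_3\}$ is \emph{not} a cycle). Hence a map $f\colon E(G)\to\{e_1,e_2,e_3\}$ is cycle-continuous precisely when, setting $A_i=f^{-1}(e_i)$, each of the three unions $A_1\cup A_2$, $A_1\cup A_3$, $A_2\cup A_3$ is an even subgraph of $G$; equivalently, at every vertex $v$ the three degrees $d_{A_1}(v),d_{A_2}(v),d_{A_3}(v)$ all have the same parity (one of the three conditions is redundant, being the symmetric difference of the other two). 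Thus $G\cc K_2^3$ is equivalent to an ordered partition $E(G)=A_1\sqcup A_2\sqcup A_3$ with this same-parity property at each vertex.

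Next I would identify this with a nowhere-zero flow. Assigning to the edges of $A_1,A_2,A_3$ the three distinct nonzero elements of $\Z_2\times\Z_2$, the flow condition $\sum_{e\ni v}f(e)=0$, read coordinatewise, says precisely that two of the three sums $d_{A_i}(v)+d_{A_j}(v)$ are even at every $v$, i.e. the same-parity condition above. So $G\cc K_2^3$ holds iff $G$ has a nowhere-zero $\Z_2\times\Z_2$-flow, and by Tutte's theorem that is equivalent to having a $4$-NZF. This gives the equivalence of the first two items.

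Finally, for the cubic case I would argue by parity. At a vertex of degree $3$ the same-parity condition together with $d_{A_1}(v)+d_{A_2}(v)+d_{A_3}(v)=3$ forces the common parity to be odd; since each $d_{A_i}(v)\in\{1,3\}$ and the three sum to $3$, each must equal $1$. Hence every vertex sees exactly one edge of each class, i.e. the partition is a proper $3$-edge-coloring, and the converse is immediate since a proper $3$-edge-coloring gives $d_{A_i}(v)=1$ for all $i,v$. I expect the only genuinely nontrivial ingredient to be the flow step, namely the classical Tutte equivalence between a nowhere-zero $4$-flow and a nowhere-zero $\Z_2\times\Z_2$-flow; everything else is a direct unwinding of definitions.
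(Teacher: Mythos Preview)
Your proof is correct; the paper itself gives no proof, stating the lemma as well-known. Your approach---identifying a $cc$ map to $K_2^3$ with a nowhere-zero $\Z_2\times\Z_2$-flow via the three nonzero group elements, invoking Tutte's theorem for the equivalence with a $4$-NZF, and then a parity count for the cubic case---is the standard argument one would expect behind the citation.
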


A cubic connected bridgeless graph is called a \emph{snark} if
is is not 3-edge-colorable. In view of the above lemma, this happens 
precisely when $G \ncc K_2^3$. 

The next result can be proved by using the cut-cycle duality, see \cite{DNR}. 

\begin{lemma}\label{l:altdef} 
Let $f: E(G) \to E(H)$ be a mapping. 
Mapping $f$ is cycle-continuous if and only if for every cut~$C$
in~$G$, the set of edges of~$H$, to which an odd number of edges of~$C$
maps, is a cut.

Moreover, it is sufficient to verify the condition for all cuts determined 
by a single vertex. 
\end{lemma}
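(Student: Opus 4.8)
The plan is to exploit the $\Z_2$-linear-algebraic duality between the cycle space and the cut space of a graph. Recall that, viewing subsets of edges as vectors in $\Z_2^{E}$ equipped with the inner product $\langle x,y\rangle = \sum_e x_e y_e \pmod 2$, a set of edges is a cycle if and only if it is orthogonal to every cut, and dually a set of edges is a cut if and only if it is orthogonal to every cycle; equivalently, the cycle space and the cut space are orthogonal complements of one another.

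First I would introduce the two $\Z_2$-linear maps associated with $f$. The pullback $f^{-1}\colon \Z_2^{E(H)} \to \Z_2^{E(G)}$, sending a set $C$ to $f^{-1}(C)$, is linear, and its adjoint is the pushforward $f_*\colon \Z_2^{E(G)} \to \Z_2^{E(H)}$ given by $(f_* D)_{e'} = |f^{-1}(e') \cap D| \bmod 2$. In words, $f_*(D)$ is exactly the set of edges of $H$ to which an odd number of edges of $D$ is mapped --- precisely the set appearing in the statement. A one-line computation yields the adjointness identity $\langle f^{-1}(C), D\rangle = \langle C, f_*(D)\rangle$ for all $C \subseteq E(H)$ and $D \subseteq E(G)$, since both sides equal $\sum_{e \in E(G)} C_{f(e)} D_e$.

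With this in hand the equivalence unwinds formally. The map $f$ is cycle-continuous iff $f^{-1}(C)$ is a cycle in $G$ for every cycle $C$ of $H$, that is, iff $f^{-1}(C)$ is orthogonal to every cut $D$ of $G$. By adjointness this is the same as $\langle C, f_*(D)\rangle = 0$ for all cycles $C$ of $H$ and all cuts $D$ of $G$, which says that $f_*(D)$ is orthogonal to the entire cycle space of $H$, hence that $f_*(D)$ is a cut in $H$. Requiring this for every cut $D$ of $G$ is exactly the condition in the lemma.

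For the second (``moreover'') clause I would use that the elementary edge-cuts $\delta(\{v\})$ span the cut space over $\Z_2$: every cut $\delta(U)$ is the symmetric difference $\sum_{v\in U}\delta(\{v\})$. Since $f_*$ is linear and cuts form a vector space (closed under symmetric difference), if $f_*(\delta(\{v\}))$ is a cut for each vertex $v$, then $f_*(\delta(U))$ is a sum of cuts, hence again a cut, for every $U$. Thus it suffices to verify the condition on single-vertex cuts. The only real content is the adjointness identity together with the recognition that the ``odd-preimage'' set is precisely the pushforward; everything else is the standard cycle--cut orthogonality, so I expect no serious obstacle beyond setting up this dictionary carefully.
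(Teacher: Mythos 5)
Your proof is correct and is exactly the argument the paper has in mind: the paper gives no proof of Lemma~\ref{l:altdef}, saying only that it ``can be proved by using the cut-cycle duality'' (citing \cite{DNR}), and your adjointness identity $\langle f^{-1}(C), D\rangle = \langle C, f_*(D)\rangle$ together with the fact that the cycle space and cut space are orthogonal complements over $\Z_2$ is precisely that duality, with the ``moreover'' clause handled correctly by linearity of $f_*$ and the fact that the elementary cuts $\delta(\{v\})$ span the cut space.
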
 

\begin{corollary} \label{c:threecut}
Suppose $f: G \cc H$ and $H$ is bridgeless. 
Then for every 3-edge-cut $\{e_1, e_2, e_3\}$ the set 
$\{f(e_1), f(e_2), f(e_3)\}$ is a 3-edge-cut. 
\end{corollary}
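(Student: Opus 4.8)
The plan is to derive this directly from Lemma~\ref{l:altdef}. A 3-edge-cut $\{e_1,e_2,e_3\}$ is in particular a cut of~$G$, so by that lemma the set $D\subseteq E(H)$ of those edges of~$H$ that receive an odd number of the three edges $e_1,e_2,e_3$ under~$f$ must be a cut of~$H$. The whole argument then reduces to understanding what $D$ can look like, depending on how many of the values $f(e_1),f(e_2),f(e_3)$ coincide.

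First I would split into cases according to these multiplicities. If $f(e_1),f(e_2),f(e_3)$ are pairwise distinct, then each of them is hit exactly once, so $D=\{f(e_1),f(e_2),f(e_3)\}$ has exactly three elements and is a cut --- precisely a 3-edge-cut, as desired. It remains only to rule out the degenerate configurations.

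The key observation is that any coincidence among the images forces $D$ to be a single edge. If exactly two of the images coincide, the doubly-hit edge has even multiplicity and drops out of~$D$, leaving a one-element set; if all three coincide, the common edge is hit three times (odd), so again $D$ is a one-element set. In either case $D$ would be a cut of~$H$ consisting of a single edge, i.e.\ a bridge. Since $H$ is bridgeless this is impossible, so these cases cannot occur and $f(e_1),f(e_2),f(e_3)$ must be distinct.

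There is no serious obstacle here; the only points requiring care are the parity bookkeeping (an edge hit twice contributes nothing to the $\Z_2$-image, which is what Lemma~\ref{l:altdef} measures) and the remark that a singleton cut is exactly a bridge --- this is precisely where the hypothesis that $H$ is bridgeless enters.
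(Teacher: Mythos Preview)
Your argument is correct and is exactly the intended derivation: the paper states this as an immediate corollary of Lemma~\ref{l:altdef} without giving an explicit proof, and your case analysis on the multiplicities of $f(e_1),f(e_2),f(e_3)$ --- ruling out coincidences via the bridgeless hypothesis --- is precisely the natural way to unpack that implication.
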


\begin{corollary} \label{c:localiso}
Let a mapping $f: E(G) \to E(H)$ be such that 
for each vertex~$v$ of~$G$, it maps all edges incident with~$v$ to 
all edges incident with some vertex of~$H$. 
Then $f$ is cycle-continuous. 
\end{corollary}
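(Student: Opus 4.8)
The plan is to use the characterization from Lemma~\ref{l:altdef}: it suffices to check that for every elementary edge-cut $\delta(v)$ in~$G$, the set of edges of~$H$ receiving an odd number of edges from $\delta(v)$ is a cut. So first I would fix an arbitrary vertex $v$ of $G$ and examine $f(\delta(v))$ under the hypothesis.

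The key observation is that the hypothesis says exactly this: for each vertex $v$ of $G$, there is a vertex $w$ of $H$ such that $f$ maps the edges incident with $v$ \emph{onto} all edges incident with $w$, i.e.\ $f(\delta_G(v)) = \delta_H(w)$ as a multiset-to-set map covering all of $\delta_H(w)$. The subtle point is the parity: Lemma~\ref{l:altdef} asks for the set of edges of~$H$ hit an \emph{odd} number of times by $\delta_G(v)$. So I would need to argue that this odd-multiplicity set is precisely $\delta_H(w)$. Here I'd use the fact that the map is onto $\delta_H(w)$, so every edge of $\delta_H(w)$ is hit at least once; the remaining work is to check that the parities come out right.

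Let me think about the parities carefully. Let $w$ be the vertex of $H$ associated to $v$, so every edge of $\delta_G(v)$ maps into $\delta_H(w)$, and every edge of $\delta_H(w)$ is the image of at least one edge of $\delta_G(v)$. An edge $e$ of $\delta_H(w)$ is in the "odd set" iff $|f^{-1}(e) \cap \delta_G(v)|$ is odd. This need not always hold — but I claim it does up to a correction that is itself a cut. In fact, even if some edges of $\delta_H(w)$ are hit an even number of times, the odd-multiplicity set is $\delta_H(w)$ minus those even-hit edges, which is a \emph{subset} of $\delta_H(w)$; the right framing is that the odd set equals the symmetric difference of the "covered" edges, and one shows this symmetric difference is $\delta_H(w)$ itself whenever the covering is by exactly the full incidence set of $w$. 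The cleanest route is to observe that, because the edges incident with $v$ map onto exactly the edges incident with $w$, the image as an element of the $\Z_2$-edge-space of $H$ (counting multiplicities mod~2) is a sum of incidence vectors, and the total number of edges at $v$ has the same parity considerations; this makes the odd-multiplicity set a single elementary edge-cut, hence a cut.

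I expect the main obstacle to be exactly this parity bookkeeping: the statement "maps all edges incident with $v$ onto all edges incident with some vertex of $H$" is a statement about \emph{sets} (surjectivity onto the incidence set of $w$), whereas the cut condition in Lemma~\ref{l:altdef} is a statement about \emph{parities of preimage sizes}. Reconciling these requires being careful that the odd-multiplicity set is not a proper nonempty subset of $\delta_H(w)$ that fails to be a cut. The resolution should be that any such map, viewed over $\Z_2$, sends the elementary cut $\delta_G(v)$ to the elementary cut $\delta_H(w)$, so the hypothesis is engineered precisely to make the parities work; verifying this reduction is the crux, after which the conclusion follows immediately from Lemma~\ref{l:altdef}. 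The remaining direction — that checking single-vertex cuts suffices — is already granted by Lemma~\ref{l:altdef}, so no further work is needed there.
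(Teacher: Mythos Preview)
Your overall plan---invoke Lemma~\ref{l:altdef} and verify the cut condition only for single-vertex cuts---is exactly the paper's intended (implicit) argument; the paper states the result as a corollary and gives no further details. You are also right to isolate the parity question as the one nontrivial point.

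Where the proposal has a genuine gap is in your reading of the hypothesis and in the attempted parity argument under that reading. You interpret ``maps all edges incident with~$v$ to all edges incident with some vertex of~$H$'' merely as a surjectivity statement $f(\delta_G(v)) = \delta_H(w)$ as sets, and then try to argue that the odd-multiplicity image must nevertheless be all of~$\delta_H(w)$. That argument cannot be completed, because under the surjective-only reading the corollary is false. For a concrete counterexample, let $G$ have two vertices joined by four parallel edges $e_1,\dots,e_4$, let $H$ have two vertices joined by three parallel edges $d_1,d_2,d_3$, and set $f(e_1)=f(e_2)=d_1$, $f(e_3)=d_2$, $f(e_4)=d_3$. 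Your version of the hypothesis holds at both vertices of~$G$, yet $f^{-1}(\{d_1,d_2\}) = \{e_1,e_2,e_3\}$ has odd degree at each vertex of~$G$, so $f$ is not cycle-continuous. Thus the sentences in your proposal asserting that ``the hypothesis is engineered precisely to make the parities work'' and that the $\Z_2$-image ``is a single elementary edge-cut'' are not justifiable as written.

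The intended reading---signalled by the label \texttt{localiso} (``local isomorphism'') and consistent with every application in the paper, all of which involve cubic graphs---is that $f$ restricted to $\delta_G(v)$ is a \emph{bijection} onto~$\delta_H(w)$. Under this reading the parity bookkeeping evaporates: each edge of~$\delta_H(w)$ is hit exactly once, so the odd-multiplicity set is $\delta_H(w)$ itself, an elementary cut, and Lemma~\ref{l:altdef} finishes the proof immediately. Once you adopt this reading, the entire paragraph of parity reconciliation becomes unnecessary and the proof collapses to two lines.
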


Corollary~\ref{c:localiso} explains a frequently mentioned version of 
Conjecture~\ref{c:Ptcol}: every cubic bridgeless graph~$G$ has a mapping 
$f: E(G) \to E(Pt)$ such that adjacent edges are mapped to adjacent edges. 

$G/e$ is the (multi)graph obtained by identifying both ends of an edge $e \in E(G)$, 
erasing the loop that results from~$e$, but keeping possible other loops and multiple edges. 
The following appears in \cite{DNR}, we include the easy proof for the reader's convenience, 
as it illustrates later proofs in our treatment.  

\begin{lemma}\label{l:contract} 
$G/e \cc G$ for every graph $G$ and $e \in E(G)$. 
\end{lemma}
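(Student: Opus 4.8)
The plan is to write down the obvious candidate map and verify cycle-continuity directly. Since $G/e$ arises from $G$ by deleting the edge $e$ and identifying its two endpoints, the edge set $E(G/e)$ is exactly $E(G)\setminus\{e\}$. I would therefore let $f\colon E(G/e)\to E(G)$ be the inclusion, sending each edge to the ``same'' edge of $G$. Because $f$ is injective and misses only $e$, for any cycle $C\subseteq E(G)$ we have $f^{-1}(C)=C\setminus\{e\}$, so the whole statement reduces to a single claim: whenever $C$ is a cycle of $G$, the edge set $C\setminus\{e\}$ is a cycle of $G/e$, i.e.\ it meets every vertex of $G/e$ an even number of times.

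For this I would run a short parity computation, letting $u,v$ be the endpoints of $e$ and $w$ the vertex of $G/e$ into which they are merged. For every vertex $x\notin\{u,v\}$ the edges incident with $x$ are unaffected both by deleting $e$ and by the identification, so the degree of $x$ in $C\setminus\{e\}$ equals its degree in $C$, which is even. The only interesting vertex is $w$: counting each loop (the image of an edge of $G$ joining $u$ and $v$ other than $e$) twice, the degree of $w$ in $C\setminus\{e\}$ within $G/e$ equals $\deg_{C\setminus\{e\}}(u)+\deg_{C\setminus\{e\}}(v)$, where these two degrees are taken in $G$. Now if $e\notin C$ both terms are even, while if $e\in C$ both terms drop by one and are odd; in either case the sum is even. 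Hence $w$, too, is met an even number of times, and $C\setminus\{e\}$ is a cycle of $G/e$.

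Alternatively, and perhaps more in the spirit of the later arguments, one could invoke Lemma~\ref{l:altdef} and check the condition only on elementary cuts of the source $G/e$. Under the injection $f$ each elementary cut $\delta(x)$ of $G/e$ is sent to the corresponding edge set in $G$: for $x\neq w$ this is $\delta(x)$ in $G$, an elementary cut, and for the merged vertex $w$ it is $\delta(\{u,v\})$ in $G$, again a cut. Since the image of every elementary cut is a cut, $f$ is cycle-continuous. I do not expect a genuine obstacle here; the only thing to handle with care is the bookkeeping of loops and of the parallel edges joining $u$ and $v$, which is exactly what makes the degree-of-$w$ identity and the cut $\delta(\{u,v\})$ come out right.
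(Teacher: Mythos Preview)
Your proposal is correct and follows essentially the same approach as the paper: define the natural inclusion $f\colon E(G/e)\to E(G)$ and check that $f^{-1}(C)=C\setminus\{e\}$ is a cycle in $G/e$ for every cycle $C$ of $G$. The paper simply asserts that $C/e$ is a cycle in $G/e$, whereas you supply the parity computation at the merged vertex; your alternative via Lemma~\ref{l:altdef} is also valid and indeed matches the style of the later 2-join and 3-join arguments.
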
 

\begin{proof}
We define $f:E(G/e) \to E(G)$ in the natural way: for an edge $a$ of~$G/e$ we 
let $f(a)$ be the edge of~$G$ from which $a$ was created. 
To prove that $f$ is a $cc$ mapping, we only need to observe, that for every 
cycle~$C$ in~$G$, $C/e$ is a cycle in~$G/e$. 
\end{proof}

We shall call the mapping $cc$ mapping from~$G/e$ to~$G$ a \emph{natural inclusion}. 


\subsection{Properties of a 2-join} 

In this and the next section we will describe two common construction of 
snarks. While the constructions are known (see, e.g., \cite{zhang-book}), 
the relation to cycle-continuous mappings has not been investigated elsewhere, and 
is crucial to our result. 

The first construction can be informally described as adding a ``gadget'' 
on an edge of a graph. Formally, let $G_1$, $G_2$ be graphs, and 
let $e_i=x_iy_i$ be an edge of~$G_i$. We delete edge~$e_i$ from~$G_i$ (for $i=1, 2$), 
and connect the two graphs by adding two new edges $x_1 x_2$ and $y_1 y_2$. 
The resulting graph will be called the \emph{2-join} of the graphs $G_1$, $G_2$ 
(some authors call this a 2-cut construction); it will be denoted by $G_1\=G_2$. 
We note that the resulting graph depends on our choice of the edges~$x_iy_i$, but 
for our purposes this coarse description will suffice. 

\begin{lemma} \label{l:twoincl} 
For every graphs~$G_1$, $G_2$ we have $G_i \cc G_1\=G_2$ for $i \in \{1,2\}$. 
\end{lemma}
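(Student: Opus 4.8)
The plan is to exhibit an explicit cycle-continuous mapping $f\colon E(G_1)\to E(H)$, where $H=G_1\=G_2$, in the spirit of the natural inclusion of Lemma~\ref{l:contract}; the case $i=2$ is symmetric. Write $a=x_1x_2$ and $b=y_1y_2$ for the two edges added in the construction. I would let $f$ be the identity on $E(G_1)\setminus\{e_1\}$ (each such edge survives verbatim in~$H$) and set $f(e_1)=a$. Since no edge of~$G_1$ is sent into the copy of~$G_2$ nor onto~$b$, for every $D\subseteq E(H)$ the set $f^{-1}(D)$ consists of the edges of $D$ lying in $E(G_1)\setminus\{e_1\}$, together with $e_1$ exactly when $a\in D$.

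First I would record the one structural fact that drives the whole argument: the pair $\{a,b\}$ is a cut of~$H$, namely $\delta(U)$ for $U$ the vertex set of the $G_1$-side. Consequently every cycle $D$ of~$H$ meets $\{a,b\}$ in an even set, so $a\in D$ if and only if $b\in D$.

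It then remains to check that $f^{-1}(D)$ has even degree at every vertex of~$G_1$. At a vertex $v\notin\{x_1,y_1\}$ this is immediate, since all edges of $D$ at~$v$ lie in $E(G_1)\setminus\{e_1\}$ and are fixed by~$f$. The delicate vertices are $x_1$ and $y_1$, where $e_1$ is reconstructed, and here I would split on whether $a,b\in D$. If neither lies in~$D$, then $e_1\notin f^{-1}(D)$ and the degree at $x_1$ (resp.\ $y_1$) is unchanged and hence even. If both lie in~$D$, then $a$ contributes~$1$ to the even $H$-degree at~$x_1$, forcing the number of $G_1$-edges of $D$ at~$x_1$ to be odd; adding $e_1$ restores evenness, and symmetrically at~$y_1$ via~$b$. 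This is exactly where the cut property of $\{a,b\}$ is used, and it is the only real obstacle: one needs $a$ and $b$ to enter $D$ together so that the single reconstructed edge $e_1$ compensates at both of its endpoints simultaneously.

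Alternatively, the same map can be verified through Lemma~\ref{l:altdef} by inspecting only the elementary edge-cuts $\delta_{G_1}(v)$: for $v\neq x_1,y_1$ the image is $\delta_H(v)$; for $v=x_1$ it is again $\delta_H(x_1)$, as $f(e_1)=a$ completes that cut; and for $v=y_1$ it is $\delta_H(y_1)\mathbin{\triangle}\{a,b\}$, which is a cut because symmetric differences of cuts are cuts. Either route reduces the lemma to the single observation about the cut $\{a,b\}$.
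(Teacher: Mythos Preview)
Your proof is correct and uses the same natural inclusion map as the paper, sending $e_1$ to $a=x_1x_2$. The paper verifies cycle-continuity via Lemma~\ref{l:altdef} by checking elementary edge-cuts---precisely your ``alternative'' route---while your primary argument checks preimages of cycles directly using the cut property of $\{a,b\}$; the two verifications are dual and amount to the same computation, and your write-up is in fact more careful than the paper's (which speaks of a ``3-edge-cut'' even though the graphs need not be cubic).
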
 

\begin{proof}
We consider the natural mapping from~$E(G_i)$ to~$E(G_1 \= G_2)$: the edge $e_i$ of~$G_i$
(that is deleted in the 2-join) will be mapped to $x_1 x_2$. 
To show the mapping is cycle-continuous, 
we use Lemma~\ref{l:altdef}: cut $\delta(\{v\})$ in~$G_i$ correspond either to the same vertex cut in~$G_1 \threejoin G_2$, 
unless $v=y_1$. The cut $\delta(\{y_1\})$ is, however, mapped also to a 3-edge-cut, which finishes the proof. 
\end{proof}

\begin{lemma}\label{l:twoproj}
Let $G_1$, $G_2$ be any graphs.  
Let $K$ be an edge-transitive graph. 
Then $G_1\= G_2 \cc K$ if and only if $G_1 \cc K$ and $G_2 \cc K$. 
\end{lemma}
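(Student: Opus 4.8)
The plan is to handle the two implications separately. The forward implication should be immediate, and the backward one is where I expect the real content---and the edge-transitivity hypothesis---to enter.

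For the forward direction, I would argue that if $G_1 \= G_2 \cc K$, then since Lemma~\ref{l:twoincl} gives $G_i \cc G_1 \= G_2$ and cycle-continuous mappings compose (the preimage of a preimage of a cycle is a cycle), composition yields $G_i \cc K$ for both $i$. This step uses neither edge-transitivity nor any structure of the 2-join beyond the inclusion lemma.

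For the backward direction, suppose $f_i : G_i \cc K$ are given, with $e_i = x_i y_i$ the deleted edges. The first move I would make is to exploit edge-transitivity: pick an automorphism of $K$ sending $f_2(e_2)$ to $f_1(e_1)$ and compose it with $f_2$; since automorphisms preserve cycles, this keeps $f_2$ cycle-continuous while arranging $f_1(e_1) = f_2(e_2) =: \eps$. Then I would define $f : E(G_1 \= G_2) \to E(K)$ to agree with $f_1$ on the edges inherited from $G_1$, with $f_2$ on those inherited from $G_2$, and to send both new connecting edges $x_1 x_2$ and $y_1 y_2$ to $\eps$.

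To check that this $f$ is cycle-continuous I would invoke Lemma~\ref{l:altdef} and verify the condition on each elementary vertex cut. Vertices away from $\{x_1, y_1, x_2, y_2\}$ are inherited unchanged and are covered by $f_1$ or $f_2$ being cycle-continuous. At each of the four gluing vertices the point is that the incident-edge set is the old one with the deleted edge $e_i$ swapped for a connecting edge; since that connecting edge is sent to $\eps = f_i(e_i)$, the multiset of $f$-images of the new cut coincides with the multiset of $f_i$-images of the original cut in $G_i$, so the odd-image set is unchanged and remains a cut. I expect this bookkeeping to be the only delicate point, and the crux that makes it work is the normalization $f_1(e_1) = f_2(e_2)$ supplied by edge-transitivity: without a common image for the two deleted edges there would be no consistent target for the connecting edges.
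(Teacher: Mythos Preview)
Your proposal is correct and matches the paper's approach: both directions are handled exactly as you describe, with the same edge-transitivity normalization $f_1(e_1)=f_2(e_2)$ and the same definition of the glued map~$f$. The only cosmetic difference is in the final verification---the paper cites Corollary~\ref{c:localiso}, while you invoke Lemma~\ref{l:altdef} and check elementary cuts directly; your version is in fact the more careful one, since Corollary~\ref{c:localiso} does not literally apply unless the $f_i$ themselves happen to be local isomorphisms.
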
 

\begin{proof}
For the forward implication it is enough to use Lemma~\ref{l:twoincl}. 
For the other one: consider cycle-continuous mappings $f_i: E(G_i) \to E(K)$, 
let $e_i = x_i y_i$ be the edges on which the 2-join operation is performed. 
As $K$~is edge-transitive, we may assume that $f_1(e_1) = f_2(e_2)$. Thus, we may 
define $f; E(G_1 \= G_2) \to E(K)$ in a natural way: $f(x_1x_2) = f(y_1y_2) = f_1(e_1)$ 
(which equals $f_2(e_2)$), 
and $f(e) = f_i(e)$ whenever $e \not= e_i$ is an edge of $G_i$. 
Corollary~\ref{c:localiso} implies easily that $f$~is cycle-continuous. 
\end{proof}

As an immediate corollary we get the following classical result 
about snarks and 2-joins: 

\begin{corollary} 
If $G_1$, $G_2$ are bridgeless cubic. Then $G_1 \= G_2$ is a snark whenever 
at least one of $G_1$, $G_2$ is a snark. 
\end{corollary}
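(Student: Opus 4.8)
The plan is to reduce the statement to the characterization of snarks by cycle-continuous maps into $K_2^3$ and then simply to quote Lemma~\ref{l:twoproj}. By the basic characterization recorded earlier, a cubic graph is $3$-edge-colorable exactly when it admits a cc map to $K_2^3$, so a cubic connected bridgeless graph is a snark precisely when $G \ncc K_2^3$. The observation that makes everything work is that $K_2^3$ is \emph{edge-transitive}: its automorphism group permutes the three parallel edges arbitrarily. This is exactly the hypothesis that Lemma~\ref{l:twoproj} requires of the target graph.

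With this in hand I would apply Lemma~\ref{l:twoproj} with $K = K_2^3$, which gives $G_1 \= G_2 \cc K_2^3$ if and only if both $G_1 \cc K_2^3$ and $G_2 \cc K_2^3$. Reading the contrapositive, $G_1 \= G_2 \ncc K_2^3$ the moment one of the two factors fails to map to $K_2^3$, that is, the moment one of $G_1$, $G_2$ is a snark. Thus the non-$3$-edge-colorability half of the conclusion is immediate, with essentially no computation.

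It then remains to check that $G_1 \= G_2$ is itself cubic, connected, and bridgeless, so that ``not $3$-edge-colorable'' genuinely upgrades to ``snark''. Cubicity is visible from the construction: deleting $e_i$ lowers $x_i$ and $y_i$ to degree~$2$, while the new edges $x_1x_2$, $y_1y_2$ raise them back to degree~$3$ and leave every other degree untouched. Connectedness follows because each $G_i$ (which I take to be connected, as snarks are) is bridgeless, so $G_i - e_i$ stays connected, and the two halves are tied together by the new edges.

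The one point needing genuine care --- and the main, if minor, obstacle --- is bridgelessness, which I would verify by exhibiting a cycle through every edge. Neither new edge is a bridge, since deleting one leaves the graph connected through the other. For an edge $f \ne e_i$ of $G_i$, bridgelessness of $G_i$ supplies a cycle through $f$ in $G_i$; if that cycle avoids $e_i$ it persists verbatim in $G_1 \= G_2$, while if it traverses $e_i = x_i y_i$ I reroute it through an $x_{3-i}$--$y_{3-i}$ path in $G_{3-i} - e_{3-i}$, which exists because $e_{3-i}$ lies on a cycle of the bridgeless graph $G_{3-i}$. Either way $f$ lies on a cycle, so $G_1 \= G_2$ is bridgeless, and the corollary follows.
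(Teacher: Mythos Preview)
Your proof is correct and follows exactly the route the paper intends: apply Lemma~\ref{l:twoproj} with $K=K_2^3$ (which is edge-transitive) and read off the contrapositive. The paper treats this as an immediate corollary and does not even spell out a proof; your added verification that $G_1\=G_2$ is cubic, connected, and bridgeless is extra care the paper leaves implicit.
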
 

Another easy corollary of Lemma~\ref{l:twoproj} is that minimal counterexample (if it exists) 
to Conjecture~\ref{c:Ptcol} does not contain a nontrivial 2-edge-cut. 

\begin{corollary} \label{c:twotoPt} 
Let $G_1$, $G_2$ be cubic bridgeless graphs. If 
$G_1 \= G_2 \ncc \Pt$ then $G_i \ncc \Pt$\, 
for some $i\in \{1, 2\}$. 
\end{corollary}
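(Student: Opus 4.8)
The plan is to recognize Corollary~\ref{c:twotoPt} as a direct contrapositive specialization of Lemma~\ref{l:twoproj}, so the main work is simply to check that the hypotheses of that lemma are met in the present setting. The statement we must prove is: if $G_1 \= G_2 \ncc \Pt$, then $G_i \ncc \Pt$ for some $i \in \{1,2\}$. First I would observe that the contrapositive of this implication reads: if $G_1 \cc \Pt$ and $G_2 \cc \Pt$, then $G_1 \= G_2 \cc \Pt$. This is exactly the conclusion delivered by the ``if'' direction of Lemma~\ref{l:twoproj}, applied with $K = \Pt$.

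The one genuine step, then, is to verify that the Petersen graph $\Pt$ satisfies the hypothesis of Lemma~\ref{l:twoproj}, namely that it is edge-transitive. This is a standard and well-known fact about $\Pt$: its automorphism group acts transitively on its edge set (indeed $\Pt$ is not merely edge-transitive but arc-transitive and distance-transitive). I would simply invoke this known property, perhaps with a one-line justification or citation, since it requires no new argument.

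With edge-transitivity of $\Pt$ in hand, the proof is essentially a single sentence. I would write: since $G_1$ and $G_2$ are cubic bridgeless graphs and $\Pt$ is edge-transitive, Lemma~\ref{l:twoproj} applies with $K = \Pt$, giving $G_1 \= G_2 \cc \Pt$ if and only if both $G_1 \cc \Pt$ and $G_2 \cc \Pt$. Taking the contrapositive of the backward direction immediately yields the claim.

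I do not anticipate any real obstacle here, as this is labeled an ``easy corollary'' in the text. The only point deserving a moment's care is that Lemma~\ref{l:twoproj} is stated for arbitrary graphs $G_1, G_2$, so the cubic bridgeless hypotheses in the corollary are not actually needed for the logical implication itself; they merely situate the corollary in the context of Conjecture~\ref{c:Ptcol} (where one cares about bridgeless cubic graphs and the minimal counterexample has no nontrivial $2$-edge-cut). I would make sure the write-up does not inadvertently suggest those hypotheses are essential to the deduction.
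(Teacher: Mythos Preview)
Your proposal is correct and matches the paper's approach exactly: the paper presents this statement as an ``easy corollary of Lemma~\ref{l:twoproj}'' without further proof, and your write-up supplies precisely the intended one-line deduction (edge-transitivity of $\Pt$ plus the contrapositive of the backward direction of Lemma~\ref{l:twoproj}). Your remark that the cubic bridgeless hypotheses are contextual rather than logically necessary is also accurate.
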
 

\subsection{Properties of a 3-join} 

A \emph{3-join} (also called 3-cut construction) is a method 
to create new snarks -- ones that contain nontrivial 3-edge cuts. 
One way to view this is that we replace a vertex in a graph by a ``gadget'' 
created from another graph. 

To be more precise, 
we consider graphs~$G_1$ and~$G_2$, delete a vertex $u_i$ of each~$G_i$, and add 
a matching between neighbors of former vertices $u_1$ and~$u_2$. 
The resulting (cubic) graph in general depends on our choice of~$u_i$'s, 
and of the matching, but in our applications it either will not matter, or will 
be discussed in advance, so we do not introduce any special notation for this. 
We use $G_1 \threejoin G_2$ to denote (any of) the resulting graph(s); we call in 
the \emph{3-join of~$G_1$ and~$G_2$}. 
Connecting edges of the 3-join are the three edges we added to connect~$G_1$ and~$G_2$. 

We collect several easy properties of the 3-join operation. 

\begin{lemma}\label{l:threeincl} 
For any graphs $G_1$, $G_2$ we have $G_i \cc G_1\threejoin G_2$ for $i=1, 2$. 
\end{lemma}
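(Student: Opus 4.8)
The plan is to imitate the proof of Lemma~\ref{l:twoincl}: exhibit an explicit natural mapping and verify cycle-continuity through the cut characterization of Lemma~\ref{l:altdef}. By symmetry it suffices to treat $i=1$. Write $a_1,b_1,c_1$ for the three neighbours of~$u_1$ in~$G_1$ and $a_2,b_2,c_2$ for the neighbours of~$u_2$ in~$G_2$, the matching of the 3-join being $a_1a_2$, $b_1b_2$, $c_1c_2$. I would define $f\colon E(G_1)\to E(G_1\threejoin G_2)$ by letting $f$ fix every edge of $G_1-u_1$ (each such edge survives unchanged in the 3-join) and by sending the three edges at~$u_1$ to the corresponding connecting edges: $u_1a_1\mapsto a_1a_2$, $u_1b_1\mapsto b_1b_2$, $u_1c_1\mapsto c_1c_2$.

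To prove that $f$ is cycle-continuous I would invoke Lemma~\ref{l:altdef} and check only the single-vertex cuts $\delta(\{v\})$ of~$G_1$. For every $v\neq u_1$ the map $f$ is injective on the edges incident with~$v$ and carries them precisely onto the edges incident with the same vertex~$v$ inside $G_1\threejoin G_2$; this holds verbatim when $v$ is not adjacent to~$u_1$, and still holds when $v\in\{a_1,b_1,c_1\}$, since the edge joining~$v$ to~$u_1$ is simply replaced by the matching edge at~$v$. Hence $f(\delta(\{v\}))=\delta(\{v\})$ is a cut, as required.

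The only genuinely interesting vertex is $u_1$ itself. Here $\delta(\{u_1\})=\{u_1a_1,u_1b_1,u_1c_1\}$, and $f$ maps this set bijectively onto the three connecting edges $\{a_1a_2,b_1b_2,c_1c_2\}$. These three edges form exactly the 3-edge-cut of the 3-join, namely $\delta\bigl(V(G_1-u_1)\bigr)$, so the image is again a cut. Since $f$ is injective on each single-vertex cut, the set of edges of~$G_1\threejoin G_2$ receiving an odd number of preimages coincides with this image in every case, and Lemma~\ref{l:altdef} then yields that $f$ is cycle-continuous.

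I expect the main (and essentially only) obstacle to be bookkeeping: confirming that $f$ is well defined on the edge set of the 3-join — that deleting~$u_1$ and inserting the matching identifies the former $u_1$-edges with the connecting edges exactly as claimed — and that no vertex other than~$u_1$ ever sees its local cut altered. Once the edge correspondence is pinned down, the verification is immediate. Alternatively one could argue directly on the cycle side, using that any cycle of $G_1\threejoin G_2$ meets the connecting 3-edge-cut in an even number of edges (hence in $0$ or $2$ of them) and tracing the two cases through~$f^{-1}$, but the cut formulation via Lemma~\ref{l:altdef} is cleaner.
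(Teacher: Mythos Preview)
Your proposal is correct and follows exactly the same route as the paper: define the natural inclusion and verify cycle-continuity via Lemma~\ref{l:altdef} by checking single-vertex cuts, with the cut at~$u_1$ going to the connecting 3-edge-cut and all other vertex cuts going to the corresponding vertex cuts in~$G_1\threejoin G_2$. The paper's proof is simply a two-sentence compression of what you wrote.
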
 

\begin{proof}
We consider the natural mapping from~$E(G_i)$ to~$E(G_1 \threejoin G_2)$. To show it is cycle-continuous, 
we use Lemma~\ref{l:altdef}: any vertex cut in~$G_i$ correspond either to a vertex cut in~$G_1 \threejoin G_2$ 
or to the connecting edges that also form a 3-edge cut. 
\end{proof}

We shall call the cycle-continuous mapping from~$G_i$ to~$G_1 \threejoin G_2$ that is used in the above lemma 
a \emph{natural inclusion}. 

\begin{lemma}\label{l:threeproj}
Let $G_1$, $G_2$ be any graphs.  
Let $K$ be a cyclically 4-edge-connected cubic graph with the following symmetry property: 
\begin{center}
Whenever $u_i$ ($i=1,2$) is a vertex and $x_{i,1}$, $x_{i,2}$, $x_{i,3}$ 
is an ordering of $N(u_i)$, there is an automorphism $f$ of~$K$ such that 
$f(u_1) = u_2$ and $f(x_{1,j}) = x_{2,j}$ for $j=1,2,3$. 
\end{center}
Then $G_1\threejoin G_2 \cc K$ if and only if $G_1 \cc K$ and $G_2 \cc K$. 
\end{lemma}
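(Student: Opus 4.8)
The plan is to mirror the structure of the $2$-join argument (Lemma~\ref{l:twoproj}), using the $3$-join inclusions together with the extra symmetry hypothesis on~$K$ to glue two maps into one. The forward implication is immediate: by Lemma~\ref{l:threeincl} we have natural inclusions $G_i \cc G_1 \threejoin G_2$, and cycle-continuous maps compose (if $g^{-1}$ of every cycle is a cycle and $f^{-1}$ of every cycle is a cycle, then $(g\circ f)^{-1} = f^{-1}\circ g^{-1}$ of every cycle is a cycle), so a map $G_1 \threejoin G_2 \cc K$ composed with each inclusion gives $G_1 \cc K$ and $G_2 \cc K$.

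For the converse I would start from cycle-continuous maps $f_i \colon E(G_i) \to E(K)$ and combine them into a single map on $E(G_1 \threejoin G_2)$. Let $u_i$ be the deleted vertex of $G_i$, with incident edges $e_{i,1}, e_{i,2}, e_{i,3}$ going to neighbours $x_{i,1}, x_{i,2}, x_{i,3}$, and let $g_j$ be the connecting edge joining $x_{1,j}$ to $x_{2,j}$. The key observation is that $\{e_{i,1}, e_{i,2}, e_{i,3}\} = \delta(\{u_i\})$ is a $3$-edge-cut of $G_i$, so by Corollary~\ref{c:threecut} (and since $K$, being cyclically $4$-edge-connected, is bridgeless) its image $\{f_i(e_{i,1}), f_i(e_{i,2}), f_i(e_{i,3})\}$ is a $3$-edge-cut of $K$. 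I would then argue that every $3$-edge-cut $\delta(U)$ of a cyclically $4$-edge-connected cubic graph is elementary: by cyclic $4$-edge-connectivity one side, say $U$, induces no cycle, i.e.\ a forest, and the degree count $3|U| = 2\cdot(\text{edges inside }U) + 3$ together with the forest bound (at most $|U|-1$ internal edges) forces $|U|=1$. Hence $f_i$ maps $e_{i,1}, e_{i,2}, e_{i,3}$ onto the three edges incident with some vertex $w_i$ of $K$, and writing $y_{i,j}$ for the other endpoint of $f_i(e_{i,j})$ yields an ordering of $N(w_i)$.

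Now the symmetry hypothesis enters. Applied to $w_1, w_2$ with these orderings, it produces an automorphism $\alpha$ of $K$ with $\alpha(w_1)=w_2$ and $\alpha(y_{1,j}) = y_{2,j}$, hence $\alpha(f_1(e_{1,j})) = f_2(e_{2,j})$ for each $j$. Since an automorphism preserves cycles, $\alpha \circ f_1$ is again cycle-continuous, so after replacing $f_1$ by $\alpha\circ f_1$ I may assume $f_1(e_{1,j}) = f_2(e_{2,j})$ for $j=1,2,3$. I then define $f \colon E(G_1 \threejoin G_2) \to E(K)$ by letting $f$ agree with $f_i$ on the undeleted edges of each $G_i$ and setting $f(g_j) = f_1(e_{1,j}) = f_2(e_{2,j})$.

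It remains to check that $f$ is cycle-continuous, for which I would use the single-vertex form of Lemma~\ref{l:altdef}: it suffices that for every vertex $v$ of $G_1 \threejoin G_2$ the set of edges of $K$ receiving an odd number of edges of $\delta(\{v\})$ is a cut. Every such $v$ is a vertex of some $G_i$ other than $u_i$. If $v \neq x_{i,j}$ then $\delta(\{v\})$ and the restriction of $f$ are literally those of $G_i$ and $f_i$, so the condition holds by cycle-continuity of $f_i$; if $v = x_{i,j}$ then $\delta(\{v\})$ arises from $\delta_{G_i}(\{v\})$ by replacing $e_{i,j}$ with $g_j$, which carries the same $f$-image, so the multiset of images is unchanged and the condition again follows from $f_i$. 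I expect the main obstacle to be the middle step, namely showing that the image $3$-edge-cuts are elementary cuts at single vertices $w_i$: only then do the ordered neighbourhoods exist to which the symmetry hypothesis can be applied, and this is exactly where cyclic $4$-edge-connectivity of $K$ is indispensable.
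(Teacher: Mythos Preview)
Your proof is correct and follows the same route as the paper: the forward direction via Lemma~\ref{l:threeincl}, the use of cyclic $4$-edge-connectivity to force the image $3$-cut to be an elementary vertex cut, the alignment of $f_1$ and $f_2$ via the symmetry hypothesis, and the gluing are all handled identically. You supply more detail than the paper does (the explicit counting argument for why a $3$-edge-cut in a cyclically $4$-edge-connected cubic graph is elementary, and a vertex-by-vertex verification via Lemma~\ref{l:altdef} where the paper simply cites Corollary~\ref{c:localiso}), but the strategy is the same.
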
 

\begin{proof}
The `only if' part follows from Lemma~\ref{l:threeincl}. For the other direction, 
consider any $f_i:G_i \cc K$ ($i=1,2$). 
Also let $v_i$ be the vertex of~$G_i$ deleted in the 3-join operation, and let 
$a_i$, $b_i$, $c_i$ be the edges incident to~$v_i$, labeled in an order compatible 
with the matching chosen in the 3-join operation. 

Using Lemma~\ref{l:altdef}, we see that $S_i=\{f_i(a_i), f_i(b_i), f_i(c_i)\}$ 
is a 3-edge cut in~$K$. As $K$ is cyclically 4-edge-connected, $S_i$ is a cut 
around some vertex of~$K$. The symmetry property together with the fact that 
isomorphism induces a $cc$ mapping implies, that we can assume that 
$S_1 = S_2$, and even  
$f_1(a_1) = f_2(a_2)$, $f_1(b_1) = f_2(b_2)$, and $f_1(c_1) = f_2(c_2)$. 
Consequently, we may define a mapping $f: G_1\threejoin G_2 \cc K$ in a natural way: 
if $e$ is an edge of~$G_i$,  we let $f(e) = f_i(e)$. Because of the above 
assumption, the connecting edges are mapped consistently. 
To verify that $f$~is cycle-continuous, we use Corollary~\ref{c:localiso}. 
\end{proof}

As an immediate corollary we get the following classical result 
about snarks and 3-joins: 

\begin{corollary} 
Let $G_1$, $G_2$ be cubic bridgeless graphs. Then $G_1 \threejoin G_2$ 
is a snark, iff at least one of $G_1$, $G_2$ is a snark. 
\end{corollary}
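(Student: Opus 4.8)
The plan is to reduce everything to the criterion recorded above that a cubic connected bridgeless graph $G$ is a snark precisely when $G \ncc K_2^3$. First I would note that $G_1 \threejoin G_2$ is again cubic, connected, and bridgeless: cubicity is immediate, connectivity holds because the three connecting edges join $G_1$ and $G_2$, and bridgelessness is a routine check using that each $G_i$ is bridgeless and that the three connecting edges form a $3$-edge-cut (a bridge of $G_1\threejoin G_2$ lying inside some $G_i$ would already be a bridge of $G_i$). Hence $G_1 \threejoin G_2$ is a legitimate snark candidate, and it suffices to prove
\[
G_1 \threejoin G_2 \cc K_2^3 \quad\Longleftrightarrow\quad G_1 \cc K_2^3 \ \text{and}\ G_2 \cc K_2^3 ,
\]
since negating this statement is exactly the claim of the corollary.

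For the direction ``at least one snark $\Rightarrow$ the $3$-join is a snark'' I would argue by contraposition, using composability of $cc$ mappings (the preimage of a cycle under a composite $g\circ f$ is $f^{-1}(g^{-1}(C))$, again a cycle). If $G_1 \threejoin G_2 \cc K_2^3$, then composing with the natural inclusions $G_i \cc G_1 \threejoin G_2$ of Lemma~\ref{l:threeincl} yields $G_i \cc K_2^3$ for both $i$, so neither $G_i$ is a snark. This establishes the ``only if'' part of the displayed equivalence.

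The substantial direction is the converse: from $f_1 : G_1 \cc K_2^3$ and $f_2 : G_2 \cc K_2^3$ build $f : G_1 \threejoin G_2 \cc K_2^3$. Let $v_i$ be the deleted vertex of $G_i$, with incident edges $a_i,b_i,c_i$. Since these form the elementary edge-cut $\delta(\{v_i\})$ and $K_2^3$ is bridgeless, Corollary~\ref{c:threecut} forces $\{f_i(a_i),f_i(b_i),f_i(c_i)\}$ to be a $3$-edge-cut of $K_2^3$, hence all three of its parallel edges. Now I would exploit that the automorphism group of $K_2^3$ realizes every permutation of its three edges: composing $f_2$ with a suitable automorphism, I may assume $f_1(a_1)=f_2(a_2)$, $f_1(b_1)=f_2(b_2)$, and $f_1(c_1)=f_2(c_2)$. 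Defining $f$ to agree with $f_i$ on the interior edges of $G_i$ and to send each connecting edge to the common value of its two matched images, the three edges at every vertex of $G_1 \threejoin G_2$ map onto the three edges at a vertex of $K_2^3$, so Corollary~\ref{c:localiso} certifies that $f$ is cycle-continuous. This is precisely the argument of Lemma~\ref{l:threeproj} instantiated at $K = K_2^3$, which is cubic and contains no two disjoint cycles (so is vacuously cyclically $4$-edge-connected) and whose symmetry requirement holds at the level of the three edges at the deleted vertex.

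The main obstacle is the alignment step in the last paragraph: making the two colorings agree on all three connecting edges simultaneously. Its resolution hinges on two facts that must be pinned down carefully --- that each $f_i$ sends the triple at the deleted vertex onto the \emph{full} $3$-edge-cut of $K_2^3$ (so that the two triples are genuinely matchable), and that $K_2^3$ admits the full symmetric group on its three edges. The one formal subtlety worth flagging is that $K_2^3$ is a multigraph, so when invoking Lemma~\ref{l:threeproj} its symmetry hypothesis should be read as acting on the edges incident to the deleted vertex rather than on a set of three distinct neighbors; reproving the step directly via Corollaries~\ref{c:threecut} and~\ref{c:localiso}, as above, sidesteps this entirely.
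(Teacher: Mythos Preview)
Your proposal is correct and follows the paper's own one-line proof, which simply says ``Apply Lemma~\ref{l:threeproj} for $K = K_2^3$''; you have just unpacked that application explicitly. Your observation that the symmetry hypothesis of Lemma~\ref{l:threeproj} is phrased for simple neighborhoods and hence literally fails for the multigraph $K_2^3$ is a fair point, and your direct verification via Corollaries~\ref{c:threecut} and~\ref{c:localiso} is exactly how the lemma's proof goes through in this case.
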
 

\begin{proof}
Apply Lemma~\ref{l:threeproj} for $K = K_2^3$. 
\end{proof}

As another easy application, we observe that minimal counterexample (if it exists) 
to Conjecture~\ref{c:Ptcol} does not contain a nontrivial 3-edge-cut. 

\begin{corollary} \label{c:threetoPt} 
Let $G_1$, $G_2$ be cubic bridgeless graphs. If 
$G_1 \threejoin G_2 \ncc \Pt$ then $G_i \ncc \Pt$\,   
for some $i\in \{1, 2\}$. 
\end{corollary}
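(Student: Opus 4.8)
The statement is the contrapositive of one direction of an instance of Lemma~\ref{l:threeproj}, so the plan is to derive it from that lemma with $K = \Pt$. Concretely, I would first restate the claim contrapositively: assuming $G_1 \cc \Pt$ and $G_2 \cc \Pt$, I want to conclude $G_1 \threejoin G_2 \cc \Pt$. This is exactly the `if' direction of Lemma~\ref{l:threeproj} specialized to the Petersen graph, so the whole argument reduces to checking that $\Pt$ satisfies the three hypotheses of that lemma: that it is cubic, that it is cyclically $4$-edge-connected, and that it possesses the stated symmetry property.

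The first two are standard facts about the Petersen graph: it is $3$-regular, and its cyclic edge-connectivity equals~$5$, so in particular no edge cut of size at most $3$ separates two cycles and $\Pt$ is cyclically $4$-edge-connected.

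The real content, and the step I expect to be the only nontrivial one, is the symmetry property: for any two vertices $u_1, u_2$ and any orderings $(x_{1,1}, x_{1,2}, x_{1,3})$, $(x_{2,1}, x_{2,2}, x_{2,3})$ of their neighborhoods, there must be an automorphism of $\Pt$ sending $u_1 \mapsto u_2$ and $x_{1,j} \mapsto x_{2,j}$ for $j=1,2,3$. I would establish this from the well-known description $\mathrm{Aut}(\Pt) \cong S_5$, acting on the $2$-subsets of a $5$-element set with adjacency given by disjointness. This group acts vertex-transitively, so I may first move $u_1$ to $u_2$; it then remains to realize every permutation of the three neighbors of a fixed vertex by an automorphism fixing that vertex. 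Equivalently, the vertex-stabilizer (of order $120/10 = 12$) must induce the full symmetric group $S_3$ on the three neighbors. This follows because $\Pt$ is $2$-arc-transitive (indeed distance-transitive): a $2$-transitive action on the $3$ neighbors must be all of $S_3$, since the only proper transitive subgroup $A_3$ is not $2$-transitive. Composing the vertex-transitive move with the appropriate stabilizer element yields the desired automorphism.

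With all three hypotheses verified, Lemma~\ref{l:threeproj} gives that $G_1 \threejoin G_2 \cc \Pt$ holds if and only if both $G_1 \cc \Pt$ and $G_2 \cc \Pt$; taking the contrapositive of the reverse implication finishes the proof. The only point requiring slight care is that the $3$-join depends on the choice of deleted vertices and of the matching, but Lemma~\ref{l:threeproj} already quantifies over exactly these choices, so the conclusion holds for any graph $G_1 \threejoin G_2$ produced by the construction.
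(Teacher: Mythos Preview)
Your proposal is correct and follows exactly the route the paper intends: the paper states the corollary as ``another easy application'' of Lemma~\ref{l:threeproj} with $K=\Pt$ and does not spell out a proof at all. Your verification that $\Pt$ satisfies the symmetry hypothesis via the $S_5$-action and $2$-arc-transitivity is correct and fills in the only detail the paper leaves implicit.
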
 

The above notwithstanding, we proceed to study the structure 
of cycle-contin\-uous mapping in graphs with 3-edge-cuts, for two reasons: first we believe, it provides 
insights that might be useful in further progress towards solving Conjecture~\ref{c:Ptcol}, 
second, we find it has an independent interest. 

\begin{lemma}\label{l:threeincomp} 
Let $G_1$, $G_2$ be $cc$-incomparable snarks. Then \\ 
$G_1\threejoin G_2 \ncc G_i$\, for each $i \in \{1, 2\}$. 
\end{lemma}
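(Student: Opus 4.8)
The plan is to derive a contradiction from the existence of a cycle-continuous mapping in the ``wrong'' direction, combining the transitivity of the relation $\cc$ with the natural inclusions provided by Lemma~\ref{l:threeincl}.

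First I would record the one structural fact that makes everything work, namely that cycle-continuous mappings are closed under composition. If $f: E(A) \to E(B)$ and $g: E(B) \to E(C)$ are both cycle-continuous, then for every cycle $D \subseteq E(C)$ we have $(g \circ f)^{-1}(D) = f^{-1}(g^{-1}(D))$; here $g^{-1}(D)$ is a cycle in $B$ because $g$ is cycle-continuous, and $f$ then pulls this cycle back to a cycle in $A$. Hence $g \circ f$ is cycle-continuous, so $\cc$ is a transitive relation and $A \cc B \cc C$ implies $A \cc C$.

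With this in hand the lemma is immediate by contradiction. For $i = 1$, suppose that $G_1 \threejoin G_2 \cc G_1$. Lemma~\ref{l:threeincl} supplies the natural inclusion $G_2 \cc G_1 \threejoin G_2$, and composing the two maps yields $G_2 \cc G_1$, contradicting the assumption that $G_1$ and $G_2$ are $cc$-incomparable. The case $i = 2$ is entirely symmetric: from the hypothetical $G_1 \threejoin G_2 \cc G_2$ together with the inclusion $G_1 \cc G_1 \threejoin G_2$ of Lemma~\ref{l:threeincl} one would obtain $G_1 \cc G_2$, again contradicting incomparability. Therefore $G_1 \threejoin G_2 \ncc G_i$ for each $i \in \{1,2\}$.

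Since the whole argument reduces to composing two already-established maps, I do not expect a genuine obstacle; the only point needing verification is the transitivity of $\cc$, which is the short preimage computation above and should perhaps be stated once as a standing observation if it has not been used explicitly before. It is worth noting that the snark hypothesis itself is not invoked directly in this argument beyond fixing the setting in which $cc$-incomparability is the operative assumption; the proof goes through for any pair of $cc$-incomparable graphs, with the snark condition ensuring, via the accompanying corollaries, that $G_1 \threejoin G_2$ is again a snark for later use.
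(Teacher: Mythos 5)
Your proof is correct and is exactly what the paper means by its one-line proof (``Immediate from Lemma~\ref{l:threeincl}''): compose the hypothetical mapping $G_1\threejoin G_2 \cc G_i$ with the natural inclusion $G_{3-i} \cc G_1\threejoin G_2$ to contradict $cc$-incomparability. Your explicit verification that $cc$ mappings compose (hence that $\cc$ is transitive) is the right supporting detail, which the paper leaves implicit.
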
 

\begin{proof}
Immediate from Lemma~\ref{l:threeincl}. 
\end{proof}

\section{The proof} 

\subsection{Critical snarks} 

For our construction we will need the following notion of criticality of snarks. 
It appears in DeVos et al.\cite{DNR}, see also \cite{daSilva}, where these graphs 
are called flow-critical snarks. 

Recall a graph $G$ is a snark if $G \ncc K_2^3$, where $K_2^3$ is a graph formed 
by two vertices and three parallel edges. 
We say $G$ is a \emph{critical snark} if for every edge~$e$ of~$G$
we have $G-e \cc K_2^3$. 
(Equivalently \cite{daSilva}, $G/e \cc K_2^3$.)  

The following lemma is a basis of our control over cycle-continuous mappings between graphs 
in our construction. 

\begin{lemma}[\cite{DNR}] \label{l:critical} 
Let $G$, $H$ be cyclically 4-edge-connected cubic graphs, 
both of which are critical snarks, suppose that $|E(G)| = |E(H)|$. 
Then $G \cc H$ iff $G \cong H$. Moreover, every cycle-continuous mapping is a bijection 
that is induced by an isomorphism of~$G$ and~$H$. 
\end{lemma}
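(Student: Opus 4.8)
The plan is to treat the two implications separately, with essentially all the content in the ``only if'' direction. For ``if'', if $\sigma$ is an isomorphism witnessing $G \cong H$, then its induced edge map sends the three edges at each vertex $v$ of $G$ onto the three edges at $\sigma(v)$, so Corollary~\ref{c:localiso} immediately gives that it is cycle-continuous; hence $G \cong H$ implies $G \cc H$, and this edge map is of the required form.

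For the ``only if'' direction, suppose $f : G \cc H$. First I would extract a vertex map. For a vertex $v$ of $G$ the star $\delta_G(v) = \{e_1,e_2,e_3\}$ is a $3$-edge-cut, so by Corollary~\ref{c:threecut} (using that $H$ is bridgeless) the set $\{f(e_1),f(e_2),f(e_3)\}$ is a $3$-edge-cut of $H$, in particular consisting of three \emph{distinct} edges. Since $H$ is cyclically $4$-edge-connected and cubic, a short counting argument shows that every $3$-edge-cut of $H$ is the star $\delta_H(w)$ of a single vertex $w$ (any acyclic side of a $3$-edge-cut in a cubic graph consists of a single vertex). Thus there is a well-defined map $g : V(G) \to V(H)$ with $f(\delta_G(v)) = \delta_H(g(v))$, and $f$ restricts to a \emph{bijection} $\delta_G(v) \to \delta_H(g(v))$ on each star. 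Consequently the image of $f$ is exactly $\bigcup_{w \in g(V(G))} \delta_H(w)$, the set of edges of $H$ meeting $g(V(G))$.

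The crucial step is to prove that $f$ is \emph{surjective}. Suppose not, and let $e'_0 \in E(H)$ be an edge outside the image of $f$. Then $f$, viewed as a map into $E(H-e'_0) = E(H)\setminus\{e'_0\}$, is still cycle-continuous: every cycle of $H-e'_0$ is a cycle of $H$ avoiding $e'_0$, and its $f$-preimage (unchanged by deleting the unused edge $e'_0$) is a cycle of $G$ because $f : G \cc H$. Hence $G \cc H - e'_0$. Now criticality of $H$ gives $H - e'_0 \cc K_2^3$, and since the composition of two cycle-continuous mappings is cycle-continuous (the preimage of a preimage of a cycle is a cycle), we obtain $G \cc K_2^3$ --- contradicting the hypothesis that $G$ is a snark. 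Therefore $f$ is surjective.

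Finally I would read off the isomorphism by a cardinality argument. Both graphs are cubic with $|E(G)| = |E(H)|$, so $|V(G)| = |V(H)|$, and a surjection between finite edge-sets of equal size is a bijection; thus $f$ is bijective. Bijectivity forces $g$ to be injective (if $g(u)=g(v)$ with $u\neq v$, then the at least five distinct edges of $\delta_G(u)\cup\delta_G(v)$ would all map into the three edges of $\delta_H(g(v))$), so $g$ is a bijection of vertex sets. For an edge $uv$ of $G$ we have $f(uv) \in \delta_H(g(u)) \cap \delta_H(g(v))$ with $g(u)\neq g(v)$, whence $g(u)g(v)$ is an edge of $H$ and $f(uv)=g(u)g(v)$; that is, $g$ is a graph isomorphism inducing $f$. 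I expect the main obstacle to be the surjectivity step: the restrict-and-compose trick is what converts ``$f$ misses an edge'' into a $3$-edge-colouring of $G$, and it is the only place where criticality (of $H$) and snarkness (of $G$) are genuinely used; everything else is structural bookkeeping built on Corollaries~\ref{c:threecut} and~\ref{c:localiso} together with the cardinality constraint.
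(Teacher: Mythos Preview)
The paper does not reproduce a proof of this lemma; it is quoted from \cite{DNR} and used as a black box. So there is no in-paper argument to compare against. Your proof is correct and is essentially the standard argument one would expect in \cite{DNR}: use Corollary~\ref{c:threecut} together with cyclic $4$-edge-connectivity to get a vertex map $g$, use criticality of~$H$ and snarkness of~$G$ to force surjectivity of~$f$ (the only substantive step), and then use $|E(G)|=|E(H)|$ to upgrade surjectivity to a bijection and read off that $g$ is an isomorphism inducing~$f$. Two small points you might make explicit for completeness: (i) the reason the three images $f(e_1),f(e_2),f(e_3)$ are genuinely distinct is that otherwise the ``odd-image'' cut from Lemma~\ref{l:altdef} would be a single edge, hence a bridge in~$H$; and (ii) cyclically $4$-edge-connected cubic snarks have no multi-edges, which is why $f(uv)\in\delta_H(g(u))\cap\delta_H(g(v))$ pins down $f(uv)=g(u)g(v)$ uniquely.
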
 

DeVos et al. \cite{DNR} claim, that if $G$ is critical then the dot product of~$G$ 
and the Petersen graph is critical as well (see \cite{zhang-book} for the definition 
of dot product). This allows (by different ways of taking the dot product) to 
create arbitrary large set of nonisomorphic critical snarks with the same number 
of vertices. However, this claim is not proved there, thus we will only 
use the following weaker fact. 

\begin{lemma}\label{l:Blanusasnarks} 
There are two snarks $B_1$, $B_2$ with 18~vertices, that are critical 
and nonisomorphic. Moreover, none of $B_1$, $B_2$ is vertex transitive; 
in particular, there is no isomorphism $f: V(B_2) \to V(B_2)$ for which 
$f(a) = b$. 
\end{lemma}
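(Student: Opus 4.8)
The plan is to take for $B_1$ and $B_2$ the two Blanu\v{s}a snarks and to verify each asserted property directly; since these graphs have only $18$ vertices, every verification is a finite check. I would first fix explicit descriptions of the two graphs. The most economical presentation is as a dot product of two copies of the Petersen graph (see \cite{zhang-book}), the two Blanu\v{s}a snarks arising from the two inequivalent ways of carrying out that operation; equivalently one may simply record an adjacency list for each, which makes all later steps self-contained.

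Three of the four claims are then routine. That each $B_i$ is cubic and bridgeless is immediate, since a dot product of bridgeless cubic graphs is again bridgeless and cubic. That each $B_i$ is a snark, i.e.\ $B_i \ncc K_2^3$, is the classical fact that the dot product of two snarks is a snark, together with the Petersen graph being a snark; failing that, one checks directly that $B_i$ admits no $3$-edge-coloring. For nonisomorphism I would exhibit an invariant separating $B_1$ and $B_2$ — for instance their cycle spectra, or the orders of their automorphism groups — which is a brief computation.

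The main work, and the principal obstacle, is criticality: I must show that for every edge $e$ of $B_i$ the graph $B_i - e$ satisfies $B_i - e \cc K_2^3$, equivalently (by \cite{daSilva}) that $B_i / e$ is $3$-edge-colorable. This is a finite but case-heavy verification ranging over all $27$ edges of each graph. To cut down the cases I would compute the automorphism group $\mathrm{Aut}(B_i)$, reduce to one representative edge from each edge-orbit, and then run a short $3$-edge-coloring search in each remaining case; this is the step where a computer check is cleanest, although it can also be completed by hand.

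Finally, the non-vertex-transitivity follows from the same automorphism data: I would locate two vertices lying in distinct orbits of $\mathrm{Aut}(B_i)$ and take these as the vertices $a$ and $b$, so that no automorphism of $B_2$ maps $a$ to $b$, which is exactly the stated refinement.
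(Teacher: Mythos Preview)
Your proposal is correct and leads to the same conclusion, but the paper handles the two nontrivial points---criticality and non-vertex-transitivity---by short structural arguments rather than by exhaustive or computer-aided verification. For criticality the paper avoids checking all edge-orbits: since each $B_i$ has girth~$5$, for any edge~$e$ the graph $B_i-e$ is a subdivision of a cubic graph on $16$~vertices of girth at least~$4$; the well-known fact that no triangle-free snark on $16$~vertices exists then forces this $16$-vertex graph, and hence $B_i-e$, to be $3$-edge-colorable. For the automorphism claim the paper simply exhibits a local invariant separating the two named vertices of~$B_2$: vertex~$b$ is incident with an edge lying in no $5$-cycle, while vertex~$a$ is not. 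Your brute-force route (reduce to edge-orbits, then color; compute $\mathrm{Aut}(B_i)$) certainly works and has the advantage of being self-contained, but the paper's argument is shorter, requires no case analysis, and trades the computation for a single quoted classification fact about small snarks.
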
 

\begin{proof} 
It is well-known that there are two nonisomorphic snarks on 18 vertices, called Blanuša 
snarks (Figure~\ref{fig:Blanusa}), let us use $B_i$ ($i=1,2$) to denote them. 
To prove criticality, we shall use the well-known fact \cite{hog}, that there is 
no triangle-free snark on 16 vertices. 

For any edge $e \in B_i$, $B_i-e$ is a subdivision of a cubic graph~$G$ on 16 vertices. 
As the girth of~$B_i$ is~$5$, the girth of~$G$ is at least~$4$, so $G$ is not a snark, 
and $G \cc K_2^3$. As $B_i-e$ is a subdivision of~$G$, we have also 
$B_i-e \cc K_2^3$, so $B_i$ is indeed critical. 

It is well-known that $B_i$'s are not vertex-transitive. For an easy proof for $B_2$ 
(this is all we will use) observe that in Figure~\ref{fig:Blanusa}, 
vertex~$b$ is adjacent with an edge contained in no 5-cycle, while vertex~$a$ is not. 
\end{proof} 

\begin{figure}[ht] 
  \centerline{\includegraphics[scale=0.80,page=3]{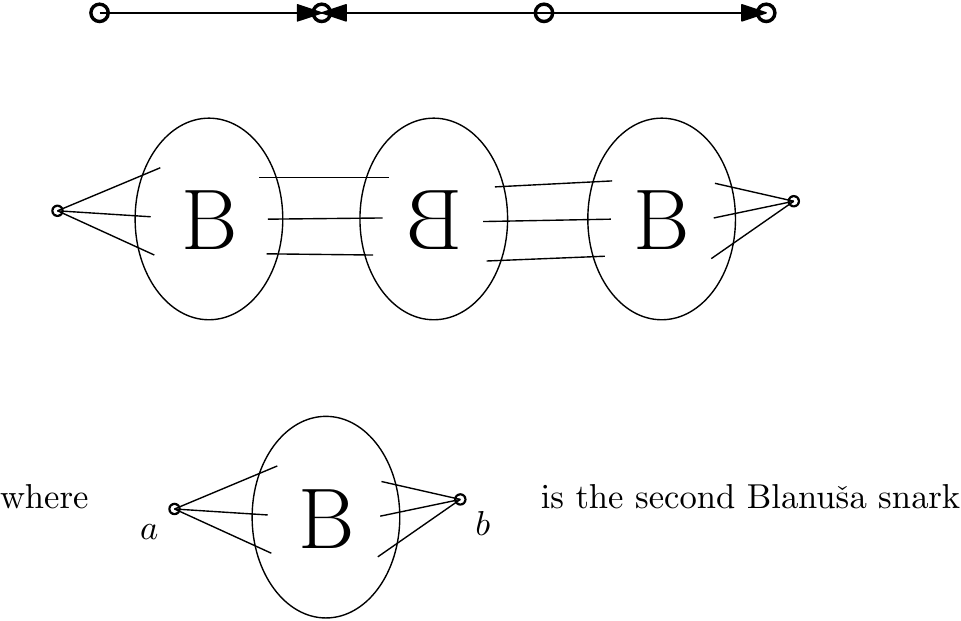}} 
  \caption {Blanuša snarks (see Lemma~\ref{l:Blanusasnarks}). The graph $B_2$ in the right 
    is not vertex transitive, in particular vertex~$a$ cannot be mapped to~$b$ by an isomorphism.
    Image by Koko90 via Wikimedia Commons, published under CC-BY-SA licence.} 
  \label{fig:Blanusa}
\end{figure}
 
\subsection{Tree of snarks} 
\label{s:snarktree} 

Let $\calG = \{ G_1, \dots, G_n\}$ be a family of critical snarks of the same size, 
so that for $i \ne j$ graphs $G_i$ and $G_j$ are not isomorphic (equivalently: 
$G_i \ncc G_j$ and $G_j \ncc G_i$). 

Let $T$ be a tree with a vertex coloring (not necessarily proper) $c:V(T) \to [n]$. 
We denote by $T(\calG)$ a family of graphs that can be obtained by replacing 
each $v \in V(T)$ by a copy of~$G_{c(v)}$ and performing a 3-join on each edge; see Fig.~\ref{fig:treesnarks} for an illustration. 
There are in general many graphs that can be constructed in this way, depending on 
which vertices one chooses for the 3-join operations. 

\begin{figure}[ht] 
  \includegraphics[page=2]{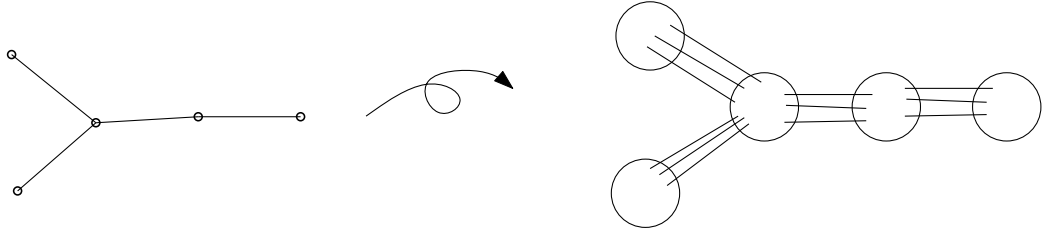}
  \caption {Illustration of the ``tree-snark'' construction.} 
  \label{fig:treesnarks}
\end{figure}

More precisely, for each $v \in V(G)$ we fix a bijection $r_v$ from~$N_T(v)$ to an 
\emph{independent set} $A_v$ in $G_{c(v)}$, we also specify an ordering of edges going 
out of vertices of~$A_v$. 
Next, we split each vertex $w$ in~$A_v$ into three degree~1 vertices; these 
will be denoted by~$w_1$, $w_2$, $w_3$. 
For each edge~$uv$ of~$T$ we identify vertices 
$r_u(v)_i$ with $r_v(u)_i$ for $i=1, 2, 3$. 
Finally, we suppress all vertices of degree~2. 

If $H$ is a graph in $T(\calG)$ and $v$ a vertex of~$T$, 
we let $H_v$~denote a ``copy'' of $G_{c(v)}$: 
subgraph of $H$ consisting of a copy of 
$G_{c(v)} - A_v$ together with the incident edges and 
neighboring vertices in~$H$. 
Further, we let $\iota_v$ denote the natural inclusion 
of~$G_{c(v)}$ into~$H$, which maps bijectively on~$H_v$. 

We define $\bar H_v$ to be the graph $H$ with all edges outside of~$H_v$ contracted. 
In other words, $\bar H_v$ is truly an isomorphic copy of~$G_{c(v)}$. 
Further, $H_{u,v}$ will denote the three edges in the intersection $H_u \cap H_v$. 

The following lemma and theorem are the key to our construction. 

\begin{lemma} \label{l:snarktree}
Let $\calG$ be as above. 
Take $H \in T(\calG)$ and $K \in \calG$. 
Then $K \cc H$ iff $K \cong G_i$\, for some $G_i \in \calG$ such that color $i$ is used on~$T$. 
Moreover, all mappings $K \cc H$ are an isomorphism on~$K$ composed with $\iota_v$ 
for some $v \in V(G)$ for which $c(v)=i$. 
\end{lemma}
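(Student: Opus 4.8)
The plan is to settle the easy ("if") direction immediately and to prove the nontrivial ("only if") direction together with the "moreover" clause by induction on $|V(T)|$. If $K \cong G_i$ and $c(v)=i$ for some $v$, then the isomorphism composed with the natural inclusion $\iota_v$ is cycle-continuous by Lemma~\ref{l:threeincl}, giving $K \cc H$. For the base case $|V(T)|=1$ we have $H \cong G_{c(v)}$, so $K$ and $H$ are cyclically $4$-edge-connected critical snarks of equal size and Lemma~\ref{l:critical} already says that every $f\colon K \cc H$ is an isomorphism, i.e.\ an isomorphism composed with $\iota_v$.

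For the inductive step I would fix $f\colon K \cc H$, pick a leaf $\ell$ of $T$ with neighbour $p$, and let $S = H_{\ell,p}$ be the connecting $3$-edge-cut. It splits $H$ into the leaf interior (whose contraction $\bar H_\ell$ is isomorphic to $G_{c(\ell)}$) and the remainder, whose contraction is the tree-snark $H' \in (T-\ell)(\calG)$. Writing $A$, $B$, $P$ for the $f$-preimages of the leaf-side internal edges, the other internal edges, and $S$ respectively, the heart of the argument is the \emph{confinement} claim that $A=\emptyset$ or $B=\emptyset$. Granting it, a short path-completion step upgrades $f$: if $B=\emptyset$ then $f(E(K))\subseteq E(\bar H_\ell)$, and every cycle of $\bar H_\ell$ is either already a cycle of $H$ or becomes one after adding a path through the (connected) opposite side, whose edges have empty preimage; hence $f\colon K \cc \bar H_\ell$, so Lemma~\ref{l:critical} forces $K\cong G_{c(\ell)}$ with $f$ an isomorphism composed with $\iota_\ell$. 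Symmetrically, $A=\emptyset$ gives $f\colon K \cc H'$ and the induction hypothesis for $T-\ell$ finishes the case.

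To prove confinement I would first note that $f$ is an \emph{immersion}: as $K$ is cubic and cyclically $4$-edge-connected, each $\delta_K(w)$ is a $3$-edge-cut, so by Corollary~\ref{c:threecut} its image is a genuine $3$-edge-cut of $H$ and $f$ is injective on each $\delta_K(w)$. Using that the $3$-edge-cuts of $H$ are exactly the vertex-cuts $\delta_H(z)$ and the connecting triples $S_{uv}$, one reads off, for each $w$, how its three edges distribute: if $f(\delta_K(w))=\delta_H(z)$ with $z$ on the leaf side then no edge at $w$ lies in $B$; if $z$ is on the other side then none lies in $A$; and if $f(\delta_K(w))$ is a connecting cut then all three edges lie in $B$ (for cuts other than $S$) or in $P$ (for $S$). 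The key consequence is that \emph{no vertex of $K$ is incident to both an $A$-edge and a $B$-edge}. Thus the $A$-edges and the $B$-edges sit on disjoint vertex sets, each vertex having degree at least $2$ in its own colour, so each nonempty side carries a cycle, and every edge joining the two sides lies in $P$. If both $A$ and $B$ were nonempty, this would exhibit an edge-cut of $K$ contained in $P$ that separates two cycles.

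The final step is to bound that separating cut. By Lemma~\ref{l:altdef} its $f$-image is a cut of $H$ contained in $S$, and since $H$ is $3$-edge-connected the only such cuts are $\emptyset$ and $S$; hence the cut meets each $P_i=f^{-1}(e_i)$ an even, respectively odd, number of times. A cut of size $2$ is impossible by $3$-edge-connectivity of $K$, and a cut of size $3$ (one edge per $e_i$) is impossible by cyclic $4$-edge-connectivity, which would complete the contradiction — provided the cut has size at most $3$. Ruling out that several edges of $K$ map to the same cut edge $e_i$, so that the separating cut could have size $\ge 4$, is the main obstacle: it amounts to showing the immersion has no nontrivial branching over the gluing region. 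I expect to resolve it by combining the parity constraint above with an extremal (minimal separating set) choice and with the criticality of $K$ via $K-e \cc K_2^3$ — the extra rigidity that already powers Lemma~\ref{l:critical}; as a sanity check in the opposite extreme, a branch-free immersion would be a covering map $K\to H$, forcing the impossible $|V(K)|\ge|V(H)|$.
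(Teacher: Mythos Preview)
Your easy direction, base case, and the path-completion reduction (once confinement holds) are fine. The real gap is exactly where you flag it: nothing you have set up bounds $|P|=|f^{-1}(S)|$ by~$3$, so cyclic $4$-edge-connectivity of~$K$ gives no contradiction. The parity constraint only tells you that each $e_i\in S$ has preimage of a fixed parity in the separating cut, which is compatible with arbitrarily large cuts; a minimal choice of the cut does not help, and invoking $K-e\cc K_2^3$ produces $3$-edge-colourings of $K-e$, not control over how many edges $f$ sends to a single~$e_i$. The branching-over-$S$ scenario you worry about is not excluded by anything in your argument, and I do not see a cheap fix along these structural lines.

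The paper takes a different and much shorter route that uses criticality on the \emph{target} side rather than connectivity on the domain side. Let $R=f(E(K))$. If for every $v\in V(T)$ some edge of $H_v$ lies outside~$R$, then by criticality each restricted piece $H'_v\subsetneq \bar H_v$ satisfies $H'_v\cc K_2^3$; the range-induced subgraph $H'\subseteq H$ is obtained from the $H'_v$ by $2$-joins and $3$-joins, so Lemmas~\ref{l:twoproj} and~\ref{l:threeproj} give $H'\cc K_2^3$, whence $K\cc H'\cc K_2^3$, contradicting that $K$ is a snark. Hence $R\supseteq E(H_v)$ for some~$v$, and since $|R|\le |E(K)|=|E(H_v)|$ (all members of~$\calG$ have the same size) in fact $R=E(H_v)$; now Lemma~\ref{l:critical} forces $f$ to be an isomorphism onto $\bar H_v$, i.e.\ an isomorphism composed with~$\iota_v$. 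The induction, the immersion analysis, and the classification of $3$-edge-cuts of~$H$ are all unnecessary: the one idea you are missing is that criticality immediately collapses the ``range spreads over several pieces'' case via a colouring argument.
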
 

\begin{proof} 
Consider a cycle-continuous mapping $f: E(K) \to E(H)$, let $R$ be the set of edges in the range of~$f$. 
Suppose first, that $R$ is exactly the edge set of one of the graphs $H_v$. 
Then $f: E(K) \to E(H_v)$ is also $cc$, the rest follows by Lemma~\ref{l:critical}. Suppose next, that 
for every $v$, some edge of $H_v$ is not in~$R$; let $H'_v$ be the subgraph of~$\bar H_v$ induced by~$R$. 
As each graph of~$\calG$ is critical, each graph~$H'_v$ has $cc$ mapping to~$K_2^3$. 
The graph $H'$ (subgraph of~$H$ induced by~$R$) is produced from the graphs $H'_v$ by 
2-join and 3-join operations, which implies that $K \cc H' \cc K_2^3$. This is a contradiction, 
as $K$~is a snark. 
\end{proof} 

\begin{theorem} \label{t:snarktrees}  
Let $T_1$, $T_2$ be two trees and let $c_i:V(T_i) \to [n]$ be arbitrary colorings. 
Let $\calG$ be as above. 

Suppose $H_i \in T_i(\calG)$ for $i=1, 2$. 
Every $cc$~mapping $g: H_1 \cc H_2$ is guided by a homomorphism $f: T_1 \to T_2$ of reflexive colored graphs: 
There is a mapping $f : V(T_1) \to V(T_2)$ such that 
\begin{itemize} 
\item $c_2(f(v)) = c_1(v)$ ($f$ respects colors), and 
\item if $uv$ is an edge of~$T_1$, then $f(u)f(v)$ is an edge of $T_2$ or $f(u) = f(v)$. 
In the first case, $g$ maps $H_{u,v}$ to $H_{f(u), f(v)}$. 
In the second one, $H_{u,v}$ is mapped to some $H_{f(u), v'}$. 
\end{itemize} 

Moreover, $g$ induces a mapping $(\bar H_1)_v$ to $(\bar H_2)_{f(v)}$ that is cycle-continuous. 
\end{theorem}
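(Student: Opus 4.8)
The plan is to use the rigidity of critical snarks, packaged as Lemma~\ref{l:snarktree}, to pin down $g$ one copy at a time, and then to read the tree homomorphism off the way the copies of $H_1$ land inside $H_2$. First I would probe $g$ by the natural inclusions. For each $v\in V(T_1)$ the composition $g\circ\iota_v\colon E(G_{c_1(v)})\to E(H_2)$ is cycle-continuous, since a composition of $cc$ maps is $cc$ (the preimage of the preimage of a cycle is a cycle). As $G_{c_1(v)}\in\calG$ and $H_2\in T_2(\calG)$, Lemma~\ref{l:snarktree} applies and tells us that $g\circ\iota_v=\iota_p\circ\alpha_v$, where $\alpha_v$ is an isomorphism of $G_{c_1(v)}$ onto $G_{c_2(p)}$ and $p$ is a vertex of $T_2$ with $c_2(p)=c_1(v)$. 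The vertex $p$ is determined by the range of $g\circ\iota_v$, namely $E((H_2)_p)$, because distinct copies have distinct edge sets; so $f(v):=p$ is well defined and satisfies $c_2(f(v))=c_1(v)$. Under the identifications $(\bar H_i)_{\bullet}\cong G_{c_i(\bullet)}$, the isomorphism $\alpha_v$ is exactly the induced map $(\bar H_1)_v\to(\bar H_2)_{f(v)}$ demanded in the ``moreover'' clause, and it is cycle-continuous because it is an isomorphism. I record the consequence I will reuse: since $\iota_v,\iota_{f(v)},\alpha_v$ are all bijective onto their images, $g$ maps $E((H_1)_v)$ \emph{bijectively} onto $E((H_2)_{f(v)})$.

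Next I would treat the edge condition. Fix an edge $uv$ of $T_1$ with connecting triple $H_{u,v}=E((H_1)_u)\cap E((H_1)_v)$. Because $g$ is a bijection on each copy, $g(H_{u,v})$ consists of three distinct edges, and it lies in $g(E((H_1)_u))\cap g(E((H_1)_v))=E((H_2)_{f(u)})\cap E((H_2)_{f(v)})$. Here I invoke the structural fact, immediate from the tree shape of $T_2(\calG)$, that for $p\ne q$ the copies satisfy $E((H_2)_p)\cap E((H_2)_q)=H_{p,q}$ when $pq\in E(T_2)$ and $=\emptyset$ otherwise. Three distinct edges cannot sit inside an empty set, so either $f(u)=f(v)$, or $f(u)f(v)\in E(T_2)$; and in the latter case $g(H_{u,v})\subseteq H_{f(u),f(v)}$, whence equality by comparing cardinalities. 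This already delivers the first bullet together with the first alternative of the second bullet.

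The remaining case $f(u)=f(v)=:p$ is the one I expect to be hardest: I must still show that $g(H_{u,v})$ is a connecting triple $H_{p,v'}$ and not a ``genuine'' vertex cut interior to copy~$p$. I would start from the observation that $H_{u,v}$ is a $3$-edge-cut of $H_1$, so by Corollary~\ref{c:threecut} (with $H_2$ bridgeless) $g(H_{u,v})$ is a $3$-edge-cut of $H_2$. Then I would classify the $3$-edge-cuts of $H_2$: since each $G_i$ is cyclically $4$-edge-connected, within a single copy the only $3$-edge-cuts are the elementary ones $\delta(x)$, and the only \emph{non}-elementary $3$-edge-cuts of $H_2$ are the connecting triples $H_{p,q}$ with $pq\in E(T_2)$. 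Read inside $(\bar H_2)_p$ through the isomorphism $\alpha_u$, the triple $H_{u,v}$ is the elementary cut $\delta_{(\bar H_1)_u}(r_u(v))$ around the attachment vertex toward~$v$, so $g(H_{u,v})=\delta_{(\bar H_2)_p}(\alpha_u(r_u(v)))$; this is a connecting triple precisely when $\alpha_u(r_u(v))\in A_p$, and otherwise it is an internal elementary cut.

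The crux is therefore to exclude the internal alternative, and this is the step I anticipate as the main obstacle. Reading the same set $g(H_{u,v})$ off $\alpha_v$ as well gives $\delta_{(\bar H_2)_p}(\alpha_u(r_u(v)))=\delta_{(\bar H_2)_p}(\alpha_v(r_v(u)))$, and since in a snark two distinct vertices cannot share all three incident edges, $\alpha_u(r_u(v))=\alpha_v(r_v(u))$. I would then argue that an isomorphism carrying an attachment vertex to a non-attachment vertex, compatibly with the matching forced here, is incompatible with the restricted automorphism structure of the critical snarks in $\calG$ --- the same non-homogeneity exploited in Lemma~\ref{l:Blanusasnarks}, where a distinguished vertex cannot be moved onto another by any isomorphism. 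This is exactly where the specific rigidity of the chosen snarks must enter, and I expect it to be the delicate point of the whole argument; a purely local check (each copy maps isomorphically) is consistent with folding onto an internal cut, so only the global rigidity of $\calG$ can rule it out. Granting this, $\alpha_u(r_u(v))=r_p(v')$ for some tree-neighbor $v'$ of~$p$, hence $g(H_{u,v})=H_{p,v'}$, which completes the second bullet and the theorem.
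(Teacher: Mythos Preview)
Your argument tracks the paper's exactly in its substance: compose $g$ with the natural inclusion~$\iota_v$, invoke Lemma~\ref{l:snarktree} to land on a unique copy $(H_2)_{f(v)}$ with matching colour, and then use that $H_{u,v}\subseteq E((H_1)_u)\cap E((H_1)_v)$ forces $(H_2)_{f(u)}$ and $(H_2)_{f(v)}$ to share edges, hence $f(u)=f(v)$ or $f(u)f(v)\in E(T_2)$. The paper's proof stops essentially at that point, closing with ``the rest follows easily''; it does not carry out the case analysis you attempt for $f(u)=f(v)$.

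Your extended treatment of that case overshoots what the paper proves and, more to the point, what can be proved from the stated hypotheses. The claim that $g(H_{u,v})$ must be a \emph{connecting} triple $H_{f(u),v'}$ rather than an interior elementary cut is not forced by criticality and cyclic $4$-edge-connectivity alone: with $\calG=\{\Pt\}$, $T_1$ a single edge and $T_2$ a single vertex, Lemma~\ref{l:threeproj} supplies a $cc$~map $\Pt\threejoin\Pt\cc\Pt$, yet $T_2$ has no neighbour~$v'$ at all. So your instinct that ``only the global rigidity of~$\calG$ can rule it out'' is correct, but that rigidity (Lemma~\ref{l:Blanusasnarks}) is not among the hypotheses of this theorem; in the paper it enters only later, inside the proofs of Corollaries~\ref{c:infantichain} and~\ref{c:poset}, once $\calG$ has been fixed to contain~$B_2$. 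For the present theorem you should not try to exclude the interior alternative; the operative content in the case $f(u)=f(v)$ is just that $g(H_{u,v})$ is the elementary cut of $(\bar H_2)_{f(u)}$ around $\alpha_u(r_u(v))$, and the paper's ``the rest follows easily'' is best read at that level.
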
 

\begin{proof} 
For a vertex~$v$ of~$T_1$, consider the composition of~$g$ with~$\iota_v$. It is a cycle-continuous mapping 
from $G_{c_1(v)}$ to~$H_2$. By Lemma~\ref{l:snarktree} this mapping is onto some~$(H_2)_{v'}$, for which 
$c_2(v') = c_1(v)$. We put $f(v) = v'$. Next, for an edge~$uv$ of~$T_1$ we observe that 
$(H_1)_{u,v}$ is a part of both~$(H_1)_u$ and~$(H_1)_v$, thus $(H_2)_{f(u)}$ and~$(H_2)_{f(v)}$ must have 
common edges. If follows that either $f(u)=f(v)$ or $f(u)f(v)$ is an edge of~$T_2$. The rest follows easily. 
\end{proof} 

As a corollary we obtain our first result, that already answers Question~\ref{q:antichain}. 

\begin{corollary} \label{c:infantichain} 
There is an infinite set of $cc$-incomparable graphs. 
\end{corollary}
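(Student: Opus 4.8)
The plan is to realize the antichain as a family of tree-snarks over a carefully chosen family of colored trees, and to use Theorem~\ref{t:snarktrees} to translate ``no $cc$ mapping'' into ``no homomorphism of colored trees''. I would fix $\calG=\{B_1,B_2\}$, the two critical, cyclically $4$-edge-connected, nonisomorphic Blanu\v{s}a snarks supplied by Lemma~\ref{l:Blanusasnarks}; these play the role of the two colors $1,2$. For each tree $T$ in a family to be specified I fix a \emph{proper} $2$-coloring $c:V(T)\to\{1,2\}$ (possible since every tree is bipartite) and pick some $H(T)\in T(\calG)$. The claim will be that $\{H(T)\}$ is an infinite $cc$-incomparable set.

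The core of the argument is to upgrade the rather permissive guiding homomorphism of Theorem~\ref{t:snarktrees} into an honest subtree embedding, exploiting that the coloring is proper. Suppose $H(T_1)\cc H(T_2)$ and let $f:T_1\to T_2$ be a guiding homomorphism, with colorings $c_1,c_2$. First, $f$ cannot contract an edge: if $uv\in E(T_1)$ then $c_1(u)\neq c_1(v)$ by properness, so $c_2(f(u))\neq c_2(f(v))$ and hence $f(u)\neq f(v)$; thus every edge maps to an edge. Second, $f$ is locally injective: by the ``moreover'' clause, $g$ induces for each $v$ a cycle-continuous map $(\bar H_1)_v\to(\bar H_2)_{f(v)}$, which by Lemma~\ref{l:critical} is an \emph{isomorphism} (the two copies being isomorphic critical snarks of equal size); this isomorphism carries the pairwise disjoint boundary triples $H_{u,v}$, one per neighbor $u$ of $v$, to pairwise distinct boundary triples of $(\bar H_2)_{f(v)}$, so distinct neighbors of $v$ get distinct images. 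A tree homomorphism that neither contracts an edge nor identifies two neighbors of a vertex is injective, since a path between two distinct vertices with equal image would map to a nonbacktracking closed walk in the tree $T_2$, which is impossible. Hence $f$ embeds $T_1$ as a subtree of $T_2$, so in particular $T_1$ is isomorphic to a subgraph of $T_2$. This paragraph is where I expect the main obstacle to lie: it is exactly here that the criticality of the Blanu\v{s}a snarks and the properness of the $2$-coloring are invoked to rule out the edge-folding that the reflexive homomorphism of Theorem~\ref{t:snarktrees} would otherwise allow.

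It then remains to exhibit an infinite antichain of trees under the subgraph relation, which is elementary. I would let $T_k$ (for $k\geq 2$) be the path on $v_0,v_1,\dots,v_k$ with two extra pendant leaves attached at each of $v_0$ and $v_k$; then $T_k$ has exactly two vertices of degree $\geq 3$, namely $v_0$ and $v_k$, and they lie at distance $k$. If $T_k$ embedded as a subgraph of $T_m$, its two degree-$3$ vertices would have to map to degree-$\geq 3$ vertices of $T_m$, i.e.\ to the two ends of $T_m$, and the $v_0$--$v_k$ path of length $k$ would map injectively onto the \emph{unique} path joining the ends of $T_m$, which has length $m$; hence $k=m$. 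Thus the trees $T_k$ are pairwise non-embeddable, so by the previous paragraph $H(T_k)\ncc H(T_m)$ whenever $k\neq m$, and $\{H(T_k):k\geq 2\}$ is the desired infinite $cc$-incomparable family.
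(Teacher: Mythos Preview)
Your proof is correct, but it follows a genuinely different line from the paper's.

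The paper also takes $\calG=\{B_1,B_2\}$, but its trees are simple paths $T_n$ on vertices $0,1,\dots,n$ colored $1(2)^{n-1}1$ (endpoints colored~$1$, interior colored~$2$ --- so \emph{not} a proper coloring). The work of excluding folds is done not by a parity argument but by the specific asymmetry of~$B_2$ from Lemma~\ref{l:Blanusasnarks}: one fixes $r_v(v-1)=a$ and $r_v(v+1)=b$, and since no automorphism of~$B_2$ swaps $a$ and~$b$, the induced isomorphism on each copy forces the guiding map to satisfy $f(i+1)=f(i)+1$, hence $m=n$.

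Your route instead uses a \emph{proper} $2$-coloring to kill edge-contractions outright, then extracts local injectivity from Lemma~\ref{l:critical} (the induced map on each block is a bijection, so disjoint boundary triples stay disjoint), and finally upgrades to a subtree embedding. This is more structural and does not touch the vertex-asymmetry of~$B_2$ at all --- any two nonisomorphic critical snarks of equal size would do --- and it cleanly reduces the problem to the elementary fact that trees admit infinite antichains under subtree embedding. The price is slightly bulkier trees (your double brooms need degree-$3$ nodes, hence independent sets of size~$3$ in the Blanu\v{s}a snarks, which is fine since they have girth~$5$) and a somewhat longer argument; the paper's version is shorter but more ad~hoc, leaning on a particular feature of one particular snark.
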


\begin{proof} 
Let $T_n$ be a path with vertices~$\{0, 1, \dots, n\}$ colored as $1(2)^{n-1} 1$. 
We let $\calG = \{B_1, B_2\}$, where as in Lemma~\ref{l:Blanusasnarks}, $B_i$'s~denote 
the Blanuša snarks. For all vertices $v \in V(T_n)$ of degree~2 we create~$r_v$ 
so, that $r_v(v-1) = a$ and $r_v(v+1) = b$. We do not specify $A_0$ nor $A_n$, 
neither the order of edges adjacent to $a$ or~$b$. We let $H_n$ denote any of $T_n(\calG)$. 

Consider $H_m$, $H_n$, suppose that $g: H_m \cc H_n$ is $cc$~mapping. 
Let $f: V(T_m) \to V(T_n)$ be the mapping guaranteed by Theorem~\ref{t:snarktrees}. 
As $f$~respects colors, we have $\{f(0), f(m)\} = \{0, n\}$. 
Next, consider $G_i = (H_m)_i$, and $G'_j = (H_n)_j$. By Theorem~\ref{t:snarktrees} again, 
$g$~is $cc$ mapping $G_i \cc G'_{f(i)}$. 
As $G_i$ and~$G'_{f(i)}$ are isomorphic to~$B_2$, Lemma~\ref{l:Blanusasnarks} implies 
that $f(i+1) = f(i)+1$. It follows that $m=n$, which finishes the proof. 
\end{proof}

\subsection{Representing posets by cycle-continuous mappings} 

Question~\ref{q:antichain} should be understood as a question 
about how complicated is the structure of $cc$~mappings. 
Next, we provide even further indication, that the structure is 
complicated indeed. 


\begin{corollary} \label{c:poset}
Every countable (finite or infinite) poset can be represented by 
a set of graphs and existence of $cc$ mappings between them. 
\end{corollary}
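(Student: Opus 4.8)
The plan is to reduce the statement to the universality of the colored-tree order that already controls cycle-continuous mappings between tree-snarks, and then to realize an arbitrary countable poset inside that order by rigid gadgets built from the two Blanu\v{s}a snarks. Enumerate the given poset as $P=\{p_1,p_2,\dots\}$ and send each $p$ to its principal down-set $D_p=\{q\in P:q\le p\}$, read through the enumeration as a subset of $\N$. As $p\in D_p$, we have $p\le q$ iff $D_p\subseteq D_q$, so $p\mapsto D_p$ embeds $P$ into $(\mathcal P(\N),\subseteq)$; it therefore suffices to construct graphs $H_S$, for $S$ ranging over the countable family $\{D_p:p\in P\}$, with $H_S\cc H_{S'}$ iff $S\subseteq S'$, and then to set $H_p:=H_{D_p}$.

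Next I would fix $\calG=\{B_1,B_2\}$ as in Lemma~\ref{l:Blanusasnarks} and fix once and for all a ``master'' colored tree together with a compatible system of $3$-join data. Its backbone is an infinite directed spine of color-$2$ blocks (copies of $B_2$), oriented by $r_v(\mathrm{prev})=a$, $r_v(\mathrm{next})=b$ exactly as in Corollary~\ref{c:infantichain}, so that the non-vertex-transitivity of $B_2$ forbids folding or reversing it. At the $n$-th spine block I attach, through a third vertex $c$ of an independent set $\{a,b,c\}$ of $B_2$, a \emph{flag} $F_n$: a directed color-$2$ path of length $\ell_n$, the $\ell_n$ chosen pairwise distinct, capped at its tip by a color-$1$ block ($B_1$). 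Having fixed all bijections $r_v$ and all edge-orderings, I let $T_S$ be the sub-tree spanned by the spine together with the flags $F_n$ for $n\in S$, and let $H_S\in T_S(\calG)$ be the corresponding tree-snark. Because the data are shared, $T_S$ is a colored sub-tree of $T_{S'}$ whenever $S\subseteq S'$.

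With this in place the two implications run as follows. If $S\subseteq S'$, the inclusion of $T_S$ into $T_{S'}$ lifts to the natural inclusion $H_S\to H_{S'}$, which is cycle-continuous by the cut-checking argument of Lemma~\ref{l:threeincl}: each elementary cut of $H_S$ maps to the same vertex-cut, except at a spine block that carries an extra flag in $H_{S'}$, where it maps to a $3$-edge-cut. This verification is local, so it is unaffected by $S'\setminus S$ being infinite. Conversely, a cycle-continuous $g:H_S\cc H_{S'}$ yields, through Theorem~\ref{t:snarktrees}, a color-respecting reflexive homomorphism $f:T_S\to T_{S'}$, and it remains to see that $f$ forces $S\subseteq S'$.

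The heart of the proof --- and the step I expect to be the main obstacle --- is the rigidity analysis of $f$, generalizing Corollary~\ref{c:infantichain}. Because every block of the spine is a copy of $B_2$ on which $g$ restricts to an isomorphism (Lemma~\ref{l:critical}) and because $a$ cannot be sent to $b$ (Lemma~\ref{l:Blanusasnarks}), $f$ can neither fold nor collapse the spine and must act on spine positions as a shift $i\mapsto i+t$ with $t\ge 0$. Capped directed $B_2$-paths of distinct lengths form a rigid antichain, so a flag can map only to an identical flag and cannot be absorbed into the bare spine (its color-$1$ tip would have to reach a color-$1$ block at the wrong distance and direction). Since $D_p\ni p$, the set $S=D_p$ is nonempty; a flag $F_{n_0}$ of $T_S$ sits over spine position $n_0$, so its image must use the flag attached at position $n_0+t$ of $T_{S'}$, of length $\ell_{n_0+t}$, and matching lengths forces $t=0$. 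With $t=0$ every flag $F_n$ of $T_S$ is carried onto the flag $F_n$ of $T_{S'}$, which is present only if $n\in S'$; hence $S\subseteq S'$. The genuinely delicate points are excluding edge-collapses and sideways escapes of flags under reflexive homomorphisms, and for these the directional $a/b$-labelling and the criticality supplied by Lemmas~\ref{l:critical} and~\ref{l:Blanusasnarks} are exactly the tools needed. Finally, since the $T_S$ and hence the $H_S$ are in general countably infinite, one works in the class of countable graphs; every cycle-continuity check used above is local and remains valid there.
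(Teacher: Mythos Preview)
Your route is genuinely different from the paper's. The paper does \emph{not} build a direct gadget for $(\mathcal P(\N),\subseteq)$; instead it invokes the universality theorem of Hubi\v{c}ka and Ne\v{s}et\v{r}il that every countable poset embeds into the homomorphism order of \emph{finite} directed paths, and then encodes a directed path $P$ by a finite path-shaped tree-snark over $\calG=\{B_2\}$ alone, using the $a/b$ asymmetry to record edge direction. So the paper's representing graphs are finite cubic bridgeless graphs, and the only rigidity needed is exactly Corollary~\ref{c:infantichain}'s: on a path of $B_2$-blocks the induced tree map is monotone because no automorphism of $B_2$ sends $a$ to $b$. Your construction, by contrast, produces countably infinite graphs (an infinite spine, possibly infinitely many flags), so even if completed it yields a weaker statement than the paper's, and it forces you to re-verify Lemma~\ref{l:snarktree} and Theorem~\ref{t:snarktrees} in the infinite setting.

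There is also a concrete gap in your rigidity step. You introduce a third attachment vertex $c\in V(B_2)$ and then argue as if the only obstruction were $a\not\mapsto b$; but once spine blocks acquire degree~$3$ in the tree, the induced block isomorphism (from Lemma~\ref{l:critical}) permutes the used attachment vertices inside $\{a,b,c\}$, and Lemma~\ref{l:Blanusasnarks} says nothing about whether $a\mapsto c$ or $b\mapsto c$ is possible. Without pinning down the $\mathrm{Aut}(B_2)$-orbits of $a$, $b$, and your chosen $c$ (and checking that $\{a,b,c\}$ is independent), you cannot exclude the spine turning into a flag at a branch point, nor a flag being rerouted along the spine. The paper never needs a third vertex: by outsourcing the combinatorics to directed paths, degree in the tree stays at most~$2$, and the single fact $a\not\mapsto b$ suffices. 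If you want to keep your direct approach, you would either have to establish the required automorphism facts for $B_2$ explicitly, or replace the degree-$3$ branching by a linear encoding --- at which point you are essentially reinventing the path-universality that the paper simply quotes.
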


\begin{figure}
  \centerline{\includegraphics[scale=1.00,page=1]{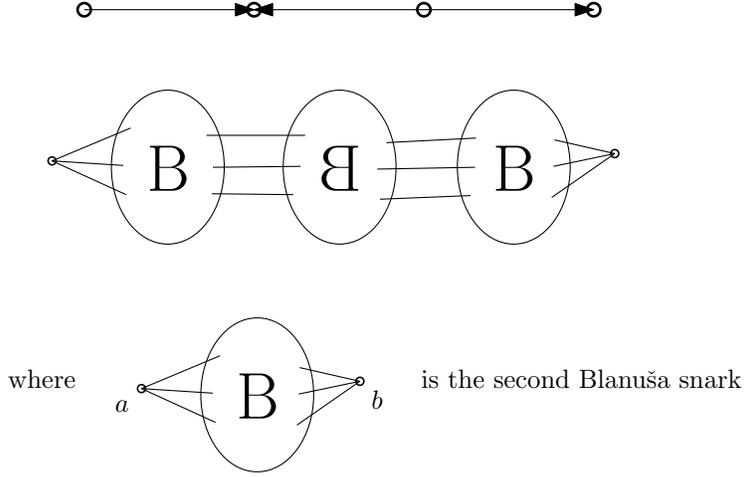}}
  \caption {Construction used for representation of arbitrary posets by $cc$ mappings.} 
  \label{fig:paths}
\end{figure}

\begin{proof} 
We use the result of Hubička and Nešetřil~\cite{HubiNes}, claiming that arbitrary 
countable posets can be represented by finite directed paths and existence 
of homomorphisms between them. 

Thus, we only need to find a mapping $m$ that to directed paths assigns cubic bridgeless graphs, 
so that $P_1 \hom P_2$ iff $m(P_1) \cc m(P_2)$. 
To do this, we use the construction depicted in Fig.~\ref{fig:paths}. Informally, we replace each directed edge 
by a copy of~$B_2$ ``from~$a$ to~$b$'' and perform a 3-join operation in-between each pair of 
adjacent edges. 
Formally, let $P$ be a path with edges (from one end to the other) $e_1$, \dots, $e_m$. 
We let $t(i)$ be the index of the edge at the tail of~$e_i$ -- that is $t(i)$ is 
either~$i-1$ (if $e_i$ goes forward with respect to our labeling) or~$i+1$. 
Note that $t(i)$ may be undefined for $i\in \{1, m\}$. 
Similarly, we define $h(i)$ to be the index of the edge adjacent to~$e_i$ at its head. 
We will use the construction from Section~\ref{s:snarktree}. Our tree~$T$ 
will be a path with vertices $1$, \dots, $m$ all colored by~$1$, our 
set of snarks will consist just of the second Blanuša snark, $\calG = \{ G_1 = B_2 \}$. 
We define $r_{i}(t(i)) = a$, and $r_{i}(h(i)) = b$, whenever $t(i)$ ($h(i)$, resp.) are defined. 
We choose an ordering of edges going out of~$a$, and~$b$; we keep this fixed for all vertices of all paths. 
Then we let $m(P)$ be the graph in~$T(\calG)$ determined by the above described choices. 

With the construction in place, we need to show that for any directed paths~$P$ and~$P'$, 
we have $P \hom P'$ if and only if $m(P) \cc m(P')$. 
The proof of the `only if' part will be direct consequence of our construction, 
the `if' part will follow from Lemma~\ref{l:snarktree}. 
To be specific, for the forward implication consider a homomorphism $f:P \hom P'$. 
Consider an edge~$e_i$ of~$P_1$, let $f(e_i) = e'_j$ (we extend the homomorphism~$f$ 
to act on edges in the natural way). As all edges were replaced by a copy of the same graph, 
we may consider an isomorphism from~$H_i$ to~$H_j$ and define~$m(f)$ to be the induced 
mapping on edges. We only need to check, that the edges in~$C := H_i \cap H_{i+1}$ 
are mapped consistently, as we are defining their image twice. 
Suppose first that $f(e_i) = f(e_{i+1}) = e'_j$ and (without loss of generality) $e_i$,~$e_{i+1}$ are meeting 
at their heads, i.e., $h(i)=i+1$ and $h(i+1)=i$. Then edges of~$C$ are mapped both times to 
the edges of $H'_j \cap H'_{h(j)}$ (and the order is the same, by our construction). 
It remains to check the case when $f(e_i)$ and~$f(e_{i+1})$ are adjacent edges, 
suppose again that $e_i$,~$e_{i+1}$ meet at their heads (other cases are analogous). 
Then the edges of~$C$ are mapped both times to~$H_{j} \cap H_{h(j)}$. 
This implies that the mappings $m(f) : E(m(P)) \to E(m(P'))$ is consistently defined 
and it maps elementary edge-cuts to elementary edge-cuts. 
Consequently, by Corollary~\ref{c:localiso}, 
$m(f)$ is cycle-continuous, which finishes the first part of the proof. 

To prove the backward implication, consider a $cc$~mapping $g: m(P) \cc m(P')$. 
Due to our construction of~$m(P)$, $m(P')$, we may use Theorem~\ref{t:snarktrees} 
to get a mapping $h:V(T) \to V(T')$, that is a homomorphism of reflexive graphs. 
Now, we may consider $h$ also as a mapping $E(P) \to E(P')$. It maps adjacent 
edges $e_{i}$, $e_{i+1}$ either to adjacent edges or to the same edge. 
It remains to check, that this mapping on edges is induced by a homomorphism 
of directed path $P \hom P'$. 
This is done in the same way, as in the proof of Corollary~\ref{c:infantichain}. 
\end{proof}

\section{Concluding remarks} 

While being a resolution to Question~\ref{q:antichain}, none of the family of examples  
we gave does violate Conjecture~\ref{c:Ptcol}: 

\begin{theorem} \label{t:toPt}
If $H \in T(\calG)$ and for every $G \in \calG$ we have $G \cc \Pt$ 
then $H \cc \Pt$. 
\end{theorem}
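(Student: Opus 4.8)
The plan is to prove this by induction on the number of vertices of the tree~$T$ underlying the construction $H \in T(\calG)$, applying Lemma~\ref{l:threeproj} with $K = \Pt$ at every step. Before starting the induction I would check that the Petersen graph satisfies the hypotheses of that lemma: it is cubic and cyclically $4$-edge-connected, and its automorphism group (isomorphic to $S_5$, of order $120$) acts transitively on the $10 \cdot 3! = 60$ ordered stars -- a vertex together with an ordering of its three neighbors. Equivalently, $\Pt$ is arc-transitive and the stabilizer of a vertex induces the full symmetric group $S_3$ on its three neighbors. This is exactly the symmetry property required in Lemma~\ref{l:threeproj}.

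For the base case, when $T$ consists of a single vertex~$v$, the graph $H$ is just a copy of $G_{c(v)} \in \calG$, so $H \cc \Pt$ is precisely the hypothesis of the theorem.

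For the inductive step I would pick a leaf~$\ell$ of~$T$ with unique neighbor~$w$ and observe that the construction of Section~\ref{s:snarktree} decomposes $H$ as a $3$-join $H = H' \threejoin G_{c(\ell)}$. Here $H'$ is the tree-snark obtained from the same data restricted to~$T - \ell$: in~$H'$ the vertex $r_w(\ell) \in A_w$ is no longer split but survives as an ordinary vertex of the copy of~$G_{c(w)}$ inside~$H'$, so $H' \in (T-\ell)(\calG)$. Since $\ell$ is a leaf, $A_\ell = \{r_\ell(w)\}$ is a single vertex, and the $3$-join attaching $G_{c(\ell)}$ to the rest is performed at $r_\ell(w)$ on one side and at $r_w(\ell)$ on the other, the new matching being exactly the connecting edges $H_{w,\ell}$. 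By the induction hypothesis $H' \cc \Pt$, and by the hypothesis of the theorem $G_{c(\ell)} \cc \Pt$; Lemma~\ref{l:threeproj} then gives $H = H' \threejoin G_{c(\ell)} \cc \Pt$, completing the induction.

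The only step demanding real care is the decomposition $H = H' \threejoin G_{c(\ell)}$: one must verify that peeling off a single leaf copy is compatible with the independent-set and vertex-splitting bookkeeping of the construction, so that $H'$ is genuinely a member of $(T-\ell)(\calG)$ and the three edges $H_{w,\ell}$ are precisely the connecting edges of the $3$-join. Once this is in place, the symmetry property of~$\Pt$ is what allows the two partial mappings to~$\Pt$ to be glued consistently across these three edges irrespective of the matching chosen at the edge~$w\ell$; this is the point where arc-transitivity of the Petersen graph is indispensable.
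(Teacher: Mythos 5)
Your proof is correct and takes essentially the same route as the paper: the paper's one-line proof (``repeatedly use Corollary~\ref{c:threetoPt}'') is exactly your induction, since Corollary~\ref{c:threetoPt} is the contrapositive of Lemma~\ref{l:threeproj} specialized to $K=\Pt$. Your leaf-peeling decomposition and the verification of the Petersen graph's symmetry property merely spell out details the paper leaves implicit.
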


\begin{proof} 
It suffices to repeatedly use Corollary~\ref{c:threetoPt}. 
\end{proof} 

Still, the presented results illustrate the complexity of $cc$ mappings. 
To better understand their structure, we suggest the following questions: 

\begin{question} 
Does the poset of cubic cyclically 4-edge-connected graphs and $cc$ mappings between them 
have infinite antichains? Does it contain every countable poset as a subposet? 
How about cyclically 5-edge-connected graphs? 
\end{question} 

For the next question, recall that in a poset $(X,\le)$ an interval 
$(a,b)$ is the set $\{ x \in X: a < x < b\}$ (we must have $a<b$ for this 
definition to make sense, otherwise we call $(a,b)$ degenerated interval). 

\begin{question} 
In the poset of graphs and $cc$ mappings between them, 
is every non-degenerated interval nonempty? 
Does every non-degenerated interval contain infinite antichain? 
Does every non-degenerated interval contain every countable poset? 
\end{question} 

Note, that if Conjecture~\ref{c:Ptcol} is true, then $(Pt, K_2)$ is 
an empty but non-degen\-er\-ated interval. Is there some other? 

We also briefly note the more general definition of flow-continuous mappings, 
that extends the notion of cycle-continuous mappings: a mapping $f:E(G) \to E(H)$ is called 
$M$-flow-continuous (for an abelian group~$M$) if for every $M$-flow~$\varphi$ on~$H$, 
the composition $\phi\circ f$ is an~$M$-flow on~$G$. For detailed discussion, 
see \cite{DNR} or \cite{rsthesis}. We only mention here, that cycle-continuous 
mappings are exactly $\Z_2$-flow-continuous ones, and that 
Corollaries~\ref{c:infantichain} and~\ref{c:poset} extend trivially to $\Z$-flow-continuous mappings. 

\bibliographystyle{rs-amsplain}
{
\bibliography{FF-antichain} 
}

\end{document}